\newtheorem{theorem}{Theorem}[section]
\newtheorem*{thmA}{Theorem A}
\newtheorem*{thmB}{Theorem B}
\newtheorem{definition}[theorem]{Definition}
\newtheorem{lemma}[theorem]{Lemma}
\newtheorem{proposition}[theorem]{Proposition}
\newtheorem{corollary}[theorem]{Corollary}
\newtheorem{remark}[theorem]{Remark}
\newcommand{\uxi}{\underline{\xi}}
\newcommand{\pp}{\partial}
\newcommand{\rr}{\mathbb{R}}
\newcommand{\unx}{\underline{x}}
\newcommand{\ux}{\underline{x}}
\newcommand{\uy}{\underline{y}}
\newcommand{\uo}{\underline{\omega}}
\newcommand{\al}{{\alpha}}
\newcommand{\mC}{{\mathbb{C}}}
\newcommand{\mR}{{\mathbb{R}}}
\newcommand{\mZ}{{\mathbb{Z}}}
\newcommand{\mN}{{\mathbb{N}}}
\newcommand{\cC}{{\mathcal{C}}}
\newcommand{\cM}{{\mathcal{M}}}
\newcommand{\cH}{{\mathcal{H}}}
\newcommand{\cS}{{\mathcal{S}}}
\newcommand{\cK}{{\mathcal{K}}}
\newcommand{\cD}{{\mathcal{D}}}
\newcommand{\la}{{\lambda}}
\title{\bf  The Radon transform between monogenic and generalized slice monogenic functions}
\author{F. Colombo\\Dipartimento di Matematica\\Politecnico di
Milano\\Via Bonardi, 9, 20133 Milano, Italy,\\ fabrizio.colombo@polimi.it \and  R. L\'avi\v cka \\ Faculty of Mathematics and Physics\\ Charles University in Prague\\
Sokolovsk\'a 83, Praha, Czech Republic\\ lavicka@karlin.mff.cuni.cz\\\and I. Sabadini\\Dipartimento di Matematica \\Politecnico di
Milano\\Via Bonardi, 9, 20133 Milano, Italy, \\ irene.sabadini@polimi.it \and V. Sou\v{c}ek\\ Faculty of Mathematics and Physics\\ Charles University in Prague\\
Sokolovsk\'a 83, Praha, Czech Republic\\ soucek@karlin.mff.cuni.cz}
\date{\today}
\begin{document}
\maketitle
\begin{abstract}
In \cite{bls}, the authors describe a link between holomorphic  functions depending on a parameter and monogenic functions defined on  $\mR^{n+1}$ using the Radon and dual Radon transforms.
The main aim of this paper is to further develop this approach. In fact, the Radon transform for functions with values in the Clifford algebra $\mR_n$  is mapping  solutions of the generalized Cauchy-Riemann equation, i.e., monogenic functions, to a~parametric family of holomorphic functions with values in $\mR_n$ and, analogously, the dual Radon transform is mapping parametric families of holomorphic functions as above to monogenic functions.
The parametric families of holomorphic functions considered in the paper can be viewed as a~generalization of the so-called slice monogenic functions. An important part of the problem solved in the paper is to find a suitable definition of the function spaces serving as the domain and the target of both integral transforms.
 \end{abstract}
\noindent AMS Classification: 30G35.

\noindent {\em Key words}: Radon transform, dual Radon transform, monogenic functions, slice monogenic functions.
\section{Introduction}

The study of solutions of the Dirac (resp. Weyl) equations is very well-established research field, usually called Clifford analysis (see \cite{bds,dss,ghs}). Following ideas of Cullen (\cite{C}),  a theory of slice monogenic functions was developed recently (\cite{slicecss,css}) as an alternative.
Both these theories are natural generalizations of the complex function theory but they are quite different from each other. Possible relations between the two theories
were  developed in odd dimensions in the context of the Fueter construction (which allows to construct monogenic functions starting from holomorphic functions) and its inversion (\cite{CoSaSo1,CoSaSo2}). The possibility to relate the two theories in even dimensions, using the Fueter mapping technique is still under investigation.\\
In \cite{bls}, there are some ideas to find a~link between slice monogenic functions and monogenic functions defined on $\mR^{n+1}$ using the Radon and dual Radon transforms.
The main aim of this paper is to further develop this approach.

In fact, the Radon transform for $\mR_n$-valued functions (where $\mR_n$ denotes the universal Clifford algebra over $n$ imaginary units $e_1,\ldots,e_n$) is mapping solutions of the generalized Cauchy-Riemann equation, i.e., monogenic functions, to a~parametric family of holomorphic functions with values in $\mR_n$ and, analogously, the dual Radon transform is mapping parametric families of holomorphic functions as above to monogenic functions.
The most important feature of these transforms,  for our aims, is the fact that they map slice monogenic functions to monogenic ones (and viceversa). So we can study their action just on initial data, i.e., restrictions to the hyperplane $x_0=0$ in $\mR^{n+1}$.

On the level of polynomial solutions, the classical Fischer decomposition for polynomials  allows us to consider just the initial data of the form $\ux^m P_k(\ux)$ for some homogeneous monogenic polynomial $P_k$ of degree $k$. The main fact is that the dual Radon transform acts on the initial data of the above type as a~multiple of the identity.

Using the inversion, the correspondence can be extended to  monogenic functions, which are polynomials in a~neighborhood of infinity. We are led to consider their Laurent expansion and the initial data of the form $|\ux|^{-(n+m)} P_k(\ux)$ or $|\ux|^{-(n+m)}\ux P_k(\ux)$ for some homogeneous monogenic polynomial $P_k$ of degree $k$.
Again, the Radon transform acts on the initial data of the above type as a~multiple of the identity.

The discussion of the Radon and the dual Radon transform on the level of initial data for polynomial
solutions is based on facts discussed
already in the papers by F. Sommen (\cite{sommen4,sommen2,sommen3}).
It is quite important, however, to extend both integral transforms to appropriate complete function spaces.
On the side of monogenic functions, the best choice
is the set (in fact a right $\mathbb R_n$-module) of 'entire' monogenic functions, i.e., monogenic functions defined on the whole space
$\mR^{n+1}.$ The attempt to define the corresponding space on the other side leads to a generalization of the standard definition of slice monogenic functions introduced in the Section 3.
The use of inversion leads to the function spaces
defined at infinity and the relation between the Taylor
and Laurent series.
The main results of the paper are Theorems A and B describing
the action of both integral transforms on the corresponding function spaces.\\
It is important to note that this approach allows to describe relations between slice monogenic functions and monogenic functions which are independent of the parity of the dimension $n$.

The plan of the paper is the following. In Section 2, we formulate the main results of the paper, namely Theorems A and B. In Section 3, we collect properties of monogenic and slice monogenic functions and, in particular,  we deal with their Laurent series expansions.  In Section 4, we recall the dual Radon transform and give a~proof of Theorem A. Finally, in Section 5, we recall the Radon transform and prove Theorem B.

\vskip 2mm
\noindent
{\bf The acknowledgement.} The three co-authors (FC, IS and VS) thank the E. \v Cech Institute (the grant P201/12/G028 of the Grant Agency of the Czech Republic)
for the support during the preparation of the paper.

\section{Main results}

In order to state  the main results of this paper, we have to introduce some function spaces.
The setting in which we will work is the real Clifford algebra
$\rr_n$ over $n$ imaginary units $e_1,\dots,e_n$ satisfying the
relations
 $$
 e_ie_j+e_je_i=-2\delta_{ij}.
 $$
We identify an element $(x_1,x_2,\ldots,x_n)\in\rr^n$ with a so called 1-vector in the Clifford algebra
through the map $$(x_1,x_2,\ldots,x_n)\mapsto \unx=
x_1e_1+\ldots+x_ne_n.
$$
By  $S^{n-1}$ we denote the sphere of unit 1-vectors in $\mathbb{R}^n$, i.e.
$$
S^{n-1}=\{ \uo=e_1\omega_1+\ldots +e_n\omega_n\ |\  \omega_1^2+\ldots +\omega_n^2=1\}.
$$

\begin{definition}[see \cite{bds}]
Let $U\subseteq \mathbb{R}^{n+1}$ be an open set. Then
a real differentiable function $f:U\to \mathbb{R}_n$ is called (left) monogenic if $\mathcal{D}f =0$ where
$\mathcal{D}$ is the generalized Cauchy-Riemann operator in $\mR^{n+1}$
$$
\mathcal{D}=\partial_{x_0}+\sum_{j=1}^n e_j\partial_{x_j}.
$$
\end{definition}

We denote by $\cM$  the right $\mathbb R_n$-module of monogenic functions on $\mR^{n+1}\setminus\{0\}$ and by $\cM_0$
the submodule of functions in $\cM$ which extend analytically to $\mR^{n+1}$.
It is well known that monogenic functions are real analytic and
\[
\cM=\cM_0\oplus \cM_\infty
\]
where $\cM_\infty$ is the right-submodule of functions $f\in\cM$ which extend analytically at $\infty$. Function theory of monogenic functions is called Clifford analysis. For an account of Clifford analysis, we refer to \cite{bds, bls, dss, GM, ghs}.

It is useful to note that the vector subspace $\mathbb{R}+\underline{\omega}\mathbb{R}$ of $\mR^{n+1}$ passing through $1$ and
$\underline{\omega}\in S^{n-1}$, denoted by $\mathbb{C}_{\underline{\omega}}$,  is a complex plane whose elements
are $x_0+\underline{\omega}p$, for $x_0$, $p\in
\mathbb{R}$. Now we introduce slice monogenic functions, namely, functions holomorphic
on each complex plane $\mC_{\underline{\omega}}$.

\begin{definition}
Let $U\subseteq\mR^2$ be an open set and let $f:\ U\times S^{n-1}\to\mR_n$ be a~smooth function. Then $f$ is called a~(generalized) slice monogenic function on $U\times S^{n-1}$
if it satisfies the equation
$\partial_{\uo}\; f (x_0,p,\underline{\omega})=0$ for any $\underline{\omega}\in S^{n-1}$ where
$$
\partial_{\uo}=\frac 12(\partial_{x_0}+\underline{\omega}\partial_p)
$$
is the Cauchy-Riemann operator on the plane $\mC_{\uo}$.
\end{definition}
\begin{remark}{\rm In this context, a~function $f(x_0,p,\uo)$ is even if $f(x_0,-p,-\uo)=f(x_0,p,\uo)$.
In what follows, we are restricting ourselves to even functions because the planes $\mC_{\underline{\omega}}$ and
$\mC_{-\underline{\omega}}$ are obviously the same.}
\end{remark}

\noindent
We denote by $\cS$ the right $\mathbb R_n$-module of even slice monogenic functions on $(\mR^2\setminus\{0\})\times S^{n-1}$ and by $\cS_0$ the submodule of functions in $\cS$ which extends analytically to $\mR^2\times S^{n-1}$.

In Theorem \ref{t-Laurent}, it is shown that $\cS=\cS_0\oplus \cS_\infty$
where $\cS_\infty$ is the submodule of functions $f\in\cS$ which extend analytically at $\infty$.
Moreover, $\cS_\infty$ is the image of $\cS_0$ under the inversion $I_{2}$ defined by
$$
 [I_2(f)](x_0,p,\uo)=\frac{x_0-\uo p}{x_0^2+p^2}
 f\left(\frac{x_0}{x_0^2+p^2},\frac{-p}{x_0^2+p^2},\uo\right).
$$
This is the usual inversion for functions defined on the complex plane extended to parametric families of holomorphic functions that we consider in this paper.

\begin{remark}{\rm
Note that our definition of slice monogenic functions is a~generalization of the usual one in the literature, but it is more suitable for our purposes.
In the previous works in the literature, see \cite{slicecss, css}, it turns out that the values $f (x_0,0,\underline{\omega})$ on the common real axis $p=0$ does not depend on $\uo$.
As it appears clear from the definition, slice monogenic functions in our sense can be viewed as holomorphic maps with values in the Clifford algebra $\mR_n$ depending on the parameter
$\uo\in S^{n-1}$.
}
\end{remark}

The first main result shows how the dual Radon transform relates the set $\cS_0$ of entire slice monogenic functions and the set $\cM_0$
of entire monogenic functions.
The dual Radon transform $S$ of a~smooth function $f$ on $\mR^2\times S^{n-1}$ is defined by
$$
S[f](x_0,\ux):=\frac{1}{A_n}\int_{S^{n-1}}f(x_0,(\ux,\uo),\uo)\;d\uo,
$$
where $(\cdot,\cdot)$ denotes the scalar product in $\mR^n$, $d\uo$ is the area element on the sphere $S^{n-1}$ and $A_n$ is the area of $S^{n-1}$.
Note that the variable $x_0$ plays a~role of parameter with respect to the transform $S$.
 As we will see in Lemma 4.2,  $S[\partial_{\uo}f]=\cD(S[f])$ and hence the transform $S$ maps slice monogenic functions into monogenic ones.
In our first main result, we summarize properties of the transform $S$ when acting on $\cS_0$, in particular, we describe its range and the~complement of its kernel ${\rm ker}(S)$.

We now introduce a~submodule $\cS\cM_0$ of the right $\mathbb R_n$-module $\cS_0$ which turns out to be the~complement of the kernel ${\rm ker}(S)$.
It is well-known that each entire monogenic function $f\in\cM_0$ is uniquely determined by its restriction $f|_{x_0=0}$.
Here $f|_{x_0=0}(\ux)=f(0,\ux),\ \ux\in\mR^n.$ Analogously,  slice monogenic functions in $\cS_0$ are uniquely determined by their restriction to $x_0=0$.

\begin{definition}\label{d-SM0}
Let $\underline{\cM}_0$ stand for the set of restrictions $f|_{x_0=0}$ of functions $f\in\cM_0$.
Let us denote by $\cS\cM_0$ the set of functions $f\in\cS_0$ such that
$$f(0,p,\uo)=f_0(p\uo),\ (p,\uo)\in\mR\times S^{n-1}$$
for some function $f_0\in\underline{\cM}_0$.
\end{definition}
By definition, a~function $f\in\cS_0$ belongs to $\cS\cM_0$ if and only if its restriction to $x_0=0$ is a~function of $\underline{\cM}_0$
when we use spherical coordinates $\ux=p\uo$.\\
 Now we are ready to state the first main result of the paper.

\begin{thmA}
The dual Radon transform $S$ is a~linear map of the $\mathbb R_n$-module $\cS_0$ onto $\cM_0$.
Moreover, $\cS_0$ can be decomposed as
$$\cS_0 ={\rm ker}(S)\oplus \cS\cM_0.$$
In particular, $S$ is an isomorphism between $\cS\cM_0$ and $\cM_0$.
\end{thmA}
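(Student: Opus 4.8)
The plan is to reduce everything to the level of initial data, exploiting the fact (asserted in the introduction) that both monogenic and slice monogenic functions in the relevant modules are determined by their restrictions to $x_0=0$, and that $S$ intertwines $\partial_{\uo}$ and $\cD$ (Lemma 4.2). First I would make precise the bijective correspondence $\cS_0\leftrightarrow \{$restrictions $f(0,\cdot,\cdot)\}$ and $\cM_0\leftrightarrow\underline{\cM}_0$, and observe that since $x_0$ is merely a parameter for $S$, the transform $S$ descends to a map $\underline S$ on initial data with $\underline S\bigl(f(0,\cdot,\cdot)\bigr)=S[f]|_{x_0=0}$. Thus it suffices to prove: $\underline S$ maps the space of (even) initial data of slice monogenic functions onto $\underline{\cM}_0$, with kernel complemented by the initial data coming from $\cS\cM_0$, and $\underline S$ restricted to the latter is a bijection onto $\underline{\cM}_0$.

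Next I would use the Fischer/Almansi-type decomposition: every $f_0\in\underline{\cM}_0$ is, on homogeneous pieces, a sum of terms $\ux^m P_k(\ux)$ with $P_k$ a homogeneous monogenic polynomial of degree $k$ (this is exactly the structure recalled in the introduction). Correspondingly, the initial data of a general $f\in\cS_0$ decompose, after passing to spherical coordinates $\ux=p\uo$, into homogeneous "slice" pieces, and one checks that the relevant building blocks are $p^{k+m}\uo^m P_k(\uo)$ for monogenic $P_k$. The key computational input — taken from Sommen's work and to be quoted rather than re-derived — is that the dual Radon transform acts on each such building block as a scalar multiple $c_{m,k}$ of the corresponding monogenic initial datum $\ux^m P_k(\ux)$, i.e. $\underline S\bigl(p^{k+m}\uo^m P_k(\uo)\bigr)=c_{m,k}\,\ux^m P_k(\ux)$. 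From here surjectivity onto $\cM_0$ is immediate (the $c_{m,k}$ are nonzero), and $\cS\cM_0$ is by its very Definition~\ref{d-SM0} the span of exactly these building blocks, so $S|_{\cS\cM_0}$ is injective with range $\cM_0$, giving the claimed isomorphism.

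For the direct sum decomposition $\cS_0=\ker(S)\oplus\cS\cM_0$, I would argue as follows. Given arbitrary $f\in\cS_0$, form $g:=$ the unique element of $\cS\cM_0$ whose image under $S$ equals $S[f]$ (available by the isomorphism just established); then $f-g\in\ker(S)$, which proves $\cS_0=\ker(S)+\cS\cM_0$. For the sum being direct, if $h\in\ker(S)\cap\cS\cM_0$ then $S[h]=0$ forces $h=0$ by injectivity of $S$ on $\cS\cM_0$. Care must be taken that $g$ genuinely lies in $\cS_0$ and not merely in some formal completion: here one needs that the slice-monogenic extension of the initial datum $\ux^m P_k(\ux)\mapsto p^{k+m}\uo^m P_k(\uo)$ converges on all of $\mR^2\times S^{n-1}$ whenever the original $\cM_0$-series converges on all of $\mR^{n+1}$, which should follow from comparing growth of coefficients — this convergence/estimate bookkeeping, together with justifying the termwise application of $S$ to the (infinite) series, is the main obstacle and the place where the evenness hypothesis and the precise decay of $c_{m,k}$ must be used. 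The rest is the routine verification that all maps are right $\mathbb R_n$-linear.
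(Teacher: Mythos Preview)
Your outline is essentially the paper's proof. Both rest on the same core computation (Proposition~4.3): $S$ sends each building block $(x_0+\uo p)^{j+k}\uo^k P_k(\uo)$ (equivalently, the initial datum $p^{j+k}\uo^j P_k(\uo)=\ux^j P_k(\ux)|_{\ux=p\uo}$) to a nonzero multiple of $X_k^{(j)}(x)P_k(\ux)$, and both need two-sided exponential bounds $C_1a_1^{j+k}\le|c_{k,j}|\le C_2a_2^{j+k}$ on these multipliers (the paper extracts them from Lemma~3.2) to pass between the convergence conditions for $\cM_0$ and for $\cS\cM_0$ when applying $S$ termwise.

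One point to clean up: the blocks $p^{k+m}\uo^m P_k(\uo)$ do \emph{not} span the initial data of a general $f\in\cS_0$; they span exactly (the initial data of) $\cS\cM_0$, which is the content of Corollary~3.6. The full $\cS_0$ contains additional blocks $(x_0+\uo p)^j\uo^k P_k(\uo)$ with $j<k$, and Proposition~4.3(b) shows $S$ annihilates precisely those. Your abstract argument for the direct sum (take $g=(S|_{\cS\cM_0})^{-1}S[f]$, then $f-g\in\ker S$) is correct and does not actually need a description of those extra blocks, so the imprecision is harmless for the logic; but as written the second paragraph could be read as claiming $\ker(S)=0$. The paper instead writes $\cS_0=\cK\oplus\cS\cM_0$ directly from the series expansion (with $\cK$ the span of the $j<k$ blocks) and then identifies $\cK=\ker(S)$; this is equivalent to your route and has the side benefit of describing $\ker(S)$ explicitly (Corollary~4.4).
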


\begin{remark}
{\rm
For a~proof of Theorem A, see Section \ref{s-dRt}.
For a~description of the kernel ${\rm ker}(S)$  and of the set $\cS\cM_0$ in terms of Taylor series expansions, we refer to Corollary \ref{c-kerS} and Corollary \ref{c-SM0}, respectively.
}
\end{remark}

In order to state our second main result,
let us recall that the Radon transform is defined as follows.
Let $f$ be an $\mR_n$-valued function defined in $\mR^{n+1}.$
For $(x_0,p,\uo)\in\mR^2\times S^{n-1}$, we define
$$
R[f](x_0,p,\uo):=\int_{L(\uo,p)} f(x_0,\ux) \;d\sigma(\ux)
$$
whenever the integral exists.
Here $L(\uo,p)=\{\ux\in\mR^n|\ (\ux,\uo)=p\}$ and $d\sigma $ is the Lebesgue measure on the hyperplane $L(\uo,p)$.
Note again that the variable $x_0$ plays a~role of parameter and $R$ is the Radon transform with respect to the variable $\ux\in\mR^n$.
  Under natural assumptions, see \cite{GGV}, $\partial_{\uo}(R[f])=R[\cD f]$ and hence the transform $R$ maps monogenic functions into slice monogenic ones.
Indeed, we prove the following result.

\begin{thmB}
The Radon transform $R$ is an injective linear map of the $\mathbb R_n$-module $\cM_\infty$ into  $\cS_\infty$.
Denote $\cS\cM_{\infty}=I_2(\cS\cM_0)$. Then $R(\cM_\infty)=\cS\cM_{\infty}$
and $R$ is an isomorphism between $\cM_\infty$ and $\cS\cM_{\infty}$.
Moreover, we have that
$$\cS_\infty=I_2({\rm ker}(S))\oplus \cS\cM_{\infty}.$$
\end{thmB}

\begin{remark}
{\rm
For a~proof of Theorem B, see Section \ref{s-Rt}.
For a~description of the $\mathbb R_n$-modules $I_2({\rm ker}S)$ and $\cS\cM_{\infty}$ in terms of Laurent series expansions, we refer to Corollary \ref{c-kerS} and Corollary \ref{c-SM8}, respectively.
}
\end{remark}

\section{Function spaces}

In this section, we summarize some properties of monogenic and slice monogenic functions that we need later on.

\subsection{Monogenic functions}

First we recall the Cauchy-Kovalevskaya extension for monogenic functions.
 As we already discussed, each monogenic function $f:\mR^{n+1}\to\mR_n$ (that is, $f\in\cM_0$) is uniquely determined by its restriction $f|_{x_0=0}$.
If $f_0\in\underline{\cM}_0$, the set of restrictions $f|_{x_0=0}$ of functions $f\in\cM_0$, then we write $f=CK(f_0)$ for the~unique function $f\in \cM_0$ such that $f|_{x_0=0}=f_0$.
Then the Cauchy-Kovalevskaya  extension operator (CK-extension for short) $CK$ is an isomorphism of the $\mathbb R_n$-module $\underline{\cM}_0$ onto $\cM_0$.

In what follows, we need to know explicitly the CK-extension of polynomials
of the form $\underline{x}^j P_k(\underline{x})$ where  $P_k:\mR^n\to\mR_n$ is a~spherical monogenic of degree $k$.
To this end, let us recall that the Gegenbauer polynomial $C^{\nu}_j$ is defined as
\begin{equation}\label{gegenbauer}
C^{\nu}_j(z)=\sum_{i=0}^{[j/2]}\frac{(-1)^i(\nu)_{j-i}}{i!(j-2i)!}(2z)^{j-2i}\text{\ \ with\ \ }
(\nu)_{j}=\nu (\nu+1)\cdots (\nu+j-1).
\end{equation}
Actually, we need the following result which is proved in \cite[p. 312, Theorem 2.2.1]{dss}.

\begin{lemma}\label{lckxj}
Let $j\in\mN_0$ and $P_k$ be a spherical monogenic of degree $k$. Then we have that
$$CK(\ux^j P_k(\ux))=X^{(j)}_k(x) P_k(\underline{x})$$
where $X^{(0)}_k=1$ and, for $j\in\mN,$ the polynomial $X^{(j)}_k$ is given by
\begin{equation}\label{La31}
X^{(j)}_k(x)=
\mu^j_k|x|^j\left (C_j^{(n-1)/2+k}\left(\frac{x_0}{|x|}\right)+\frac{n+2k-1}{n+2k+j-1}C_{j-1}^{(n+1)/2+k}\left(\frac{x_0}{|x|}\right)\frac{\underline{x}}{|x|}\right )
\end{equation}
with $\mu^{2l}_k=(-1)^l(C_{2l}^{(n-1)/2+k}(0))^{-1}$ and $$\mu^{2l+1}_k=(-1)^l\frac{n+2k+2l}{n+2k-1}(C_{2l}^{(n+1)/2+k}(0))^{-1}.$$
\end{lemma}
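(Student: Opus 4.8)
The plan is to verify the claimed formula by checking that the right-hand side is indeed the CK-extension of $\ux^j P_k(\ux)$, i.e. that $X^{(j)}_k(x)P_k(\ux)$ is monogenic and restricts to $\ux^j P_k(\ux)$ at $x_0=0$. Since the CK-extension is unique, it suffices to establish these two facts. First I would reduce to an ODE computation: writing $f=X^{(j)}_k(x)P_k(\ux)$ with $X^{(j)}_k(x)=a_j(x_0,|x|)+b_j(x_0,|x|)\frac{\ux}{|x|}$, one applies $\cD=\partial_{x_0}+\sum_j e_j\partial_{x_j}$ and uses that $P_k$ is a homogeneous spherical monogenic of degree $k$, together with the standard identities $\sum_j e_j\partial_{x_j}(\ux)=-n$, $\ux^2=-|x|^2$, and $\underline\partial\,(r^k P_k)$-type relations, to convert monogenicity of $f$ into a coupled linear ODE system for the radial/axial parts $a_j,b_j$ in the single variable $t=x_0/|x|$. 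The Gegenbauer differential equation $(1-t^2)y''-(2\nu+1)ty'+j(j+2\nu)y=0$ for $C^\nu_j$ is precisely what makes the Ansatz \eqref{La31} — with the shift $\nu=(n-1)/2+k$ in the scalar part and $\nu=(n+1)/2+k$ in the vector part — solve that system; the contiguous relation linking $C^{\nu}_j$ and $C^{\nu+1}_{j-1}$ supplies the coupling coefficient $\frac{n+2k-1}{n+2k+j-1}$.

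Next I would pin down the initial condition and hence the normalizing constants $\mu^j_k$. At $x_0=0$ one has $t=0$, so $X^{(j)}_k|_{x_0=0}=\mu^j_k|x|^j\bigl(C^{(n-1)/2+k}_j(0)+\tfrac{n+2k-1}{n+2k+j-1}C^{(n+1)/2+k}_{j-1}(0)\tfrac{\ux}{|x|}\bigr)$. For $j=2l$ even, $C^{(n+1)/2+k}_{j-1}(0)=C^{(n+1)/2+k}_{2l-1}(0)=0$ because Gegenbauer polynomials of odd index vanish at $0$; so the vector part drops out and matching $\mu^{2l}_k|x|^{2l}C^{(n-1)/2+k}_{2l}(0)=(-1)^l|x|^{2l}$ (the sign coming from $\ux^{2l}=(-1)^l|x|^{2l}$) forces $\mu^{2l}_k=(-1)^l\bigl(C^{(n-1)/2+k}_{2l}(0)\bigr)^{-1}$. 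For $j=2l+1$ odd, the scalar part $C^{(n-1)/2+k}_{2l+1}(0)=0$ vanishes, leaving $\mu^{2l+1}_k\tfrac{n+2k-1}{n+2k+2l}C^{(n+1)/2+k}_{2l}(0)|x|^{2l}\ux=(-1)^l|x|^{2l}\ux$, which gives the stated $\mu^{2l+1}_k$. The case $j=0$ is the trivial $X^{(0)}_k=1$.

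The main obstacle I anticipate is purely bookkeeping: carrying out the substitution of the radial Ansatz into $\cD f=0$ correctly, i.e. keeping straight how $\partial_{x_j}$ acts on $|x|$, on $x_0/|x|$, and on $\ux/|x|$, and how the Clifford-algebra cross-terms $e_j\ux$ recombine using $P_k$'s monogenicity $\underline\partial P_k=0$ and its degree-of-homogeneity relation $\sum_j x_j\partial_{x_j}P_k=kP_k$. Once the system is reduced to the Gegenbauer ODE plus its first-order contiguous relations, the verification is mechanical. Since this lemma is quoted from \cite[p.~312, Theorem 2.2.1]{dss}, I would in fact present only the structure of the argument — Ansatz, reduction to the Gegenbauer equation, determination of $\mu^j_k$ by evaluation at $x_0=0$ — and refer to \cite{dss} for the detailed radial computation, rather than reproduce it in full.
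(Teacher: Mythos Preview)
The paper does not actually prove this lemma: it introduces the statement with ``Actually, we need the following result which is proved in \cite[p.~312, Theorem 2.2.1]{dss}'' and leaves it at that. Your proposal therefore goes well beyond what the paper does. That said, your outline is sound and is in fact the strategy used in \cite{dss}: one makes the radial Ansatz $X^{(j)}_k(x)=|x|^j\bigl(A(t)+B(t)\,\ux/|x|\bigr)$ with $t=x_0/|x|$, applies $\cD$ to $X^{(j)}_k P_k$ using $\underline\partial P_k=0$ and Euler's relation for $P_k$, and obtains a first-order system coupling $A$ and $B$ whose second-order consequence is the Gegenbauer equation with the indicated parameters; the contiguous relations for $C^\nu_j$ then fix the ratio between the two components. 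Your check of the normalizing constants $\mu^j_k$ via the parity of $C^\nu_m(0)$ and $\ux^{2l}=(-1)^l|\ux|^{2l}$, $\ux^{2l+1}=(-1)^l|\ux|^{2l}\ux$ is correct. Since you already recognise that the lemma is cited from \cite{dss}, the cleanest write-up matches the paper's: state the formula and refer the reader to \cite{dss} for the derivation, perhaps adding the one-line verification of $\mu^j_k$ at $x_0=0$ if you want to make the normalization transparent.
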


To give an estimate of the embedding factors $X^{(j)}_k$ we use the following simple lemma.

\begin{lemma}\label{l-estimate}
Let $m\in\frac12\mN_0$, $a_1\in(0,1)$ and $a_2\in(2,\infty)$. Then there are $C_1,C_2\in(0,\infty)$ such that
\begin{equation}\label{e-est_Gamma}
C_1a_1^{j+k}\leq
\frac{\Gamma(j+k+1)}{\Gamma(j+1)\Gamma(k+1+m)}
\leq C_2a_2^{j+k}
\end{equation}
for all $j,k\in\frac12\mN_0.$
\end{lemma}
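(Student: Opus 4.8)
The plan is to reduce the two-sided bound on the ratio
$\frac{\Gamma(j+k+1)}{\Gamma(j+1)\,\Gamma(k+1+m)}$ to a bound on a product
of simple factors, and then observe that such a product grows no faster than
geometrically in $j+k$. Concretely, I would first write, using the integer
shift $\Gamma(k+1+m)=(k+m)(k-1+m)\cdots$ down to a fixed $\Gamma(m_0)$ with
$m_0\in(0,1]$ (here $m\in\tfrac12\mN_0$, so after subtracting $\lfloor m\rfloor$
or $\lfloor m\rfloor+\tfrac12$ we land on $\Gamma(1)=1$ or $\Gamma(\tfrac12)=\sqrt\pi$),
$$\frac{\Gamma(j+k+1)}{\Gamma(j+1)\,\Gamma(k+1+m)}
=\binom{j+k}{k}\cdot\frac{\Gamma(k+1)}{\Gamma(k+1+m)}.$$
The first factor is the ordinary binomial coefficient, which satisfies
$1\le\binom{j+k}{k}\le 2^{j+k}$ for all $j,k\in\mN_0$. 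The second factor
$\Gamma(k+1)/\Gamma(k+1+m)$ behaves like $k^{-m}$ for large $k$ by Stirling,
hence is bounded above by a constant and below by $c\,k^{-m}$; in particular
it lies between two positive constants times $a^{k}$ for any $a\in(0,1)$ for
the lower bound and is $\le C$ for the upper bound. This already gives the
upper estimate with $a_2=2$ (indeed with any $a_2>2$, absorbing the bounded
factor into $C_2$), and a lower estimate of the form
$\frac{\Gamma(j+k+1)}{\Gamma(j+1)\,\Gamma(k+1+m)}\ge c\,k^{-m}$.

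The remaining work is the \emph{lower} bound $C_1 a_1^{j+k}$ with
$a_1\in(0,1)$. Since $\binom{j+k}{k}\ge 1$, it suffices to show
$\frac{\Gamma(k+1)}{\Gamma(k+1+m)}\ge C_1 a_1^{k}$ for all $k$, and then also
handle the dependence on $j$: but $a_1^{j+k}=a_1^{j}a_1^{k}\le a_1^{k}$ since
$a_1<1$ and $j\ge 0$, so once we bound $\binom{j+k}{k}\frac{\Gamma(k+1)}{\Gamma(k+1+m)}$
below by $C_1 a_1^{k}$ the factor $a_1^{j}\le 1$ only helps. Thus the only
inequality to verify is $\frac{\Gamma(k+1)}{\Gamma(k+1+m)}\ge C_1 a_1^{k}$,
i.e. $\Gamma(k+1+m)\le C_1^{-1}a_1^{-k}\Gamma(k+1)$; writing $a_1^{-1}=1+\delta$
with $\delta>0$, this says $\Gamma(k+1+m)/\Gamma(k+1)$ grows at most like
$(1+\delta)^k$, which is clear because $\Gamma(k+1+m)/\Gamma(k+1)$ grows only
polynomially (like $k^m$) by the standard asymptotics
$\Gamma(k+1+m)/\Gamma(k+1)\sim k^{m}$, and $k^{m}=o((1+\delta)^k)$ for every
$\delta>0$. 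A fully elementary justification avoiding asymptotics: for $m\le
\ell$ an integer, $\Gamma(k+1+m)\le\Gamma(k+1+\ell)=(k+\ell)(k+\ell-1)\cdots(k+1)\Gamma(k+1)\le(k+\ell)^{\ell}\Gamma(k+1)$,
and $(k+\ell)^{\ell}\le C_1^{-1}(1+\delta)^{k}$ for a suitable constant since
any polynomial in $k$ is dominated by any fixed exponential.

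The main obstacle is essentially bookkeeping rather than depth: one must be
careful that the constants $C_1,C_2$ can be chosen uniformly in $j,k\in\tfrac12\mN_0$
(not just $\mN_0$), which forces treating the half-integer shift in the
Gamma-function reduction, and that $m\in\tfrac12\mN_0$ is handled by peeling
off either $\lfloor m\rfloor$ or $\lfloor m\rfloor+\tfrac12$ factors so the
residual Gamma value is one of the finitely many constants $\Gamma(1)$,
$\Gamma(\tfrac12)$; since $m$ is a fixed parameter of the lemma, the resulting
constants depend only on $m$, $a_1$, $a_2$, as required. Putting the upper
and lower parts together yields \eqref{e-est_Gamma}.
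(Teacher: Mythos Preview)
Your approach is essentially the same as the paper's: factor the ratio as
$\binom{j+k}{k}\cdot\dfrac{\Gamma(k+1)}{\Gamma(k+1+m)}$, use the trivial bounds
$1\le\binom{j+k}{k}\le 2^{j+k}$, and observe that the remaining factor
$\Gamma(k+1)/\Gamma(k+1+m)=[(k+m)\cdots(k+1)]^{-1}$ is polynomial in $k$ and hence
absorbed by any geometric factor. The only place the paper is more explicit is the
reduction from $j,k\in\tfrac12\mN_0$ to $j,k\in\mN_0$: rather than leaving this as
bookkeeping, it invokes the elementary two-sided bound
$\tfrac{\sqrt{\pi}}{2}\le \Gamma(j+\tfrac12)/\Gamma(j)\le j\sqrt{\pi}$ to replace each
half-integer argument by a neighboring integer at the cost of a polynomial factor,
which is again harmless. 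Your sketch acknowledges this step but does not spell it out;
otherwise the two arguments coincide.
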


\begin{proof}
Because of the estimate
$$
\frac{\sqrt{\pi}}{2}\leq\frac{\Gamma(j+1/2)}{\Gamma(j)}\leq j\sqrt{\pi},\
j\in\mN,
$$
we can limit ourselves to $j,k,m\in\mN_0$. Then \eqref{e-est_Gamma} follows from
$$\frac{\Gamma(j+k+1)}{\Gamma(j+1)\Gamma(k+1+m)}=\frac{1}{(k+m)\cdots(k+1)}\binom{j+k}{j}$$ and
$1\leq\binom{j+k}{j}\leq\binom{j+k}{0}+\binom{j+k}{1}+\cdots+\binom{j+k}{j+k}=2^{j+k}$.
\end{proof}

\begin{lemma}\label{l_est_EF} Let $X_k^{(j)}$ be as in \eqref{La31}.
There is a~constant $b>0$ such that $$|X^{(j)}_k(x)|\leq b^{k+j}\; |x|^j,\ x\in\mR^{n+1},$$ for all $k,j\in\mN_0$.
\end{lemma}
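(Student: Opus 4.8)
The plan is to reduce the estimate on $X^{(j)}_k$ to a bound on the Gegenbauer polynomials appearing in \eqref{La31} together with a bound on the constants $\mu^j_k$, and then to feed both into Lemma \ref{l-estimate}. First I would write, for $x\in\mR^{n+1}$ with $|x|\le 1$ say and $t=x_0/|x|\in[-1,1]$ (the general case follows by homogeneity since $X^{(j)}_k$ is homogeneous of degree $j$, so it suffices to estimate the quantity $X^{(j)}_k(x)/|x|^j$ on the unit sphere),
\[
\frac{|X^{(j)}_k(x)|}{|x|^j}\le |\mu^j_k|\left(\bigl|C_j^{(n-1)/2+k}(t)\bigr|+\frac{n+2k-1}{n+2k+j-1}\bigl|C_{j-1}^{(n+1)/2+k}(t)\bigr|\right),
\]
and note that the rational prefactor $\tfrac{n+2k-1}{n+2k+j-1}$ is bounded by $1$. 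So the whole problem is to bound $|\mu^j_k|\cdot\sup_{t\in[-1,1]}|C_j^{\nu}(t)|$ for $\nu=(n\pm1)/2+k$.

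Next I would recall the two standard facts about Gegenbauer polynomials that do the work. For $\nu>0$ the maximum of $|C^\nu_j|$ on $[-1,1]$ is attained at the endpoints, $\sup_{[-1,1]}|C^\nu_j|=C^\nu_j(1)=\binom{j+2\nu-1}{j}=\frac{\Gamma(j+2\nu)}{\Gamma(j+1)\Gamma(2\nu)}=\frac{(2\nu)_j}{j!}$; and the value at the origin satisfies $|C^\nu_{2l}(0)|=\frac{(\nu)_l}{l!}=\frac{\Gamma(l+\nu)}{\Gamma(l+1)\Gamma(\nu)}$ (and $C^\nu_{2l+1}(0)=0$, which is why only even-index values enter the definition of $\mu$). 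Then for $j=2l$,
\[
|\mu^{2l}_k|\,C^{(n-1)/2+k}_{2l}(1)=\frac{C^{(n-1)/2+k}_{2l}(1)}{C^{(n-1)/2+k}_{2l}(0)}
=\frac{\Gamma(2l+n-1+2k)\,\Gamma(l+1)\,\Gamma((n-1)/2+k)}{\Gamma(2l+1)\,\Gamma(n-1+2k)\,\Gamma(l+(n-1)/2+k)},
\]
and similarly for $j=2l+1$ with the extra bounded factor $\tfrac{n+2k+2l}{n+2k-1}\le 1+\tfrac{2l}{n-1}$, which is at most polynomial in $l$ and hence absorbable into an exponential. The term $\tfrac{n+2k-1}{n+2k+j-1}C^{(n+1)/2+k}_{j-1}$ is handled the same way: $|\mu^{2l}_k|\,C^{(n+1)/2+k}_{2l-1}(1)$ is of the same shape after trivial reindexing.

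At this point the task is purely to show each of these ratios of Gamma functions is $\le b^{k+j}$ for a fixed $b$. I would massage the expression above into the form covered by Lemma \ref{l-estimate}: up to constants and polynomial-in-$(k+j)$ factors (which get swallowed by enlarging $b$, using that $p(k+j)\le C_p\,2^{k+j}$ for any polynomial $p$), the quantity $\tfrac{\Gamma(2l+n-1+2k)\,\Gamma(l+1)\,\Gamma((n-1)/2+k)}{\Gamma(2l+1)\,\Gamma(n-1+2k)\,\Gamma(l+(n-1)/2+k)}$ is comparable to $\tfrac{\Gamma((j+n-1)+2k)}{\Gamma(j+1)\,\Gamma(k+1+m)}$ with $m=(n-1)/2\in\tfrac12\mN_0$; the duplication-type shifts $2l\leftrightarrow j$ and $2k$ versus $k$ only change the ``effective'' $j$ and $k$ by bounded linear amounts, which rescales the base of the exponential. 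Applying the upper bound in \eqref{e-est_Gamma} with a choice of $a_2\in(2,\infty)$ then gives the desired $b^{k+j}$. Reassembling, $|X^{(j)}_k(x)|\le b^{k+j}|x|^j$ with, say, $b$ the maximum of the bases produced for the two summands.

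The main obstacle I expect is bookkeeping rather than anything deep: one has to track carefully how the index substitutions ($j=2l$ or $2l+1$, and arguments $2l\pm1$, with parameters $(n\pm1)/2+k$) interact with the monotonicity/growth of Gamma ratios, and make sure the various polynomial prefactors (the rational functions of $k,j,l$ like $\tfrac{n+2k+2l}{n+2k-1}$, the $\Gamma((n\pm1)/2+k)/\Gamma(n-1+2k)$ normalizations, and the $\sqrt\pi$-type corrections when passing through half-integers) are genuinely at most polynomial in $k+j$ so that enlarging $b$ finitely absorbs them. Once that is organized, Lemma \ref{l-estimate} does the real analytic work and the conclusion is immediate.
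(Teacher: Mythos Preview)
Your approach is correct and is precisely the one the paper uses: the paper's proof is the two-line observation that the result follows from Lemma~\ref{l-estimate} together with the standard Gegenbauer identities $|C_{2l}^{\nu}(0)|=\Gamma(\nu+l)/(\Gamma(l+1)\Gamma(\nu))$ and $\sup_{|t|\le 1}|C_j^{\nu}(t)|=C_j^{\nu}(1)=\Gamma(2\nu+j)/(\Gamma(j+1)\Gamma(2\nu))$, which are exactly the two facts you invoke. Your write-up simply unpacks the bookkeeping that the paper leaves implicit; one small clean-up is that rather than trying to massage the full product into a single ratio of the form in Lemma~\ref{l-estimate}, it is tidier to apply the upper bound of \eqref{e-est_Gamma} to $C^{\nu}_j(1)=\Gamma(j+2\nu)/(\Gamma(j+1)\Gamma(2\nu))$ and the lower bound (in reciprocal form) to $1/|C^{\nu}_{2l}(0)|=\Gamma(l+1)\Gamma(\nu)/\Gamma(l+\nu)$ separately, then multiply.
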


\begin{proof}
It follows easily from Lemma \ref{l-estimate}
and the following estimates
$$|C_{2l}^{\nu}(0)|=\frac{\Gamma(\nu+l)}{\Gamma(l+1)\Gamma(\nu)}
\mbox{\ \ and\ \ }
|C_j^{\nu}(t)|\leq C_j^{\nu}(1)=\frac{\Gamma(2\nu+j)}{\Gamma(j+1)\Gamma(2\nu)}\mbox{\ \ for\ \ } |t|\leq 1.$$
\end{proof}

Now we describe Taylor series expansions for entire monogenic functions.

\begin{theorem}\label{t-Taylor_m}
A~function $f:\mR^{n+1}\to\mR_n$ belongs to $\cM_0$ if and only if it has a~unique expansion of the form
\begin{equation}\label{e-Taylor_m}
f(x)=\sum_{j,k\in\mN_0} X^{(j)}_k(x) P_{k,j}(\ux)
\end{equation}
for some $k$-homogeneous monogenic polynomials $P_{k,j}:\mR^n\to\mR_n$ satisfying
\begin{equation}\label{Pkj}
\limsup_{j+k\to\infty} \sqrt[j+k]{\|P_{k,j}\|}=0.
\end{equation}
Here $\|\cdot\|$ is the $L_2$-norm $\|\cdot\|_{L_2}$ or the supremum norm $\|\cdot\|_{\infty}$ on the sphere $S^{n-1}$, that is,
$$\|g\|_{L_2}^2=\frac{1}{A_n}\int_{S^{n-1}}|g(\uo)|^2\;d\uo\mbox{\ \ and\ \ }\|g\|_{\infty}=\sup\{|g(\uo)|;\ \uo\in  S^{n-1} \}.$$
\end{theorem}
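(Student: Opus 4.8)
The plan is to reduce the problem to the well-known Taylor expansion of a monogenic function into spherical monogenics via the Cauchy--Kovalevskaya extension, and then to control convergence using the estimate in Lemma \ref{l_est_EF}. First I would recall the classical fact (see \cite{bds, dss}) that the restriction $f_0 = f|_{x_0=0}$ of any $f \in \cM_0$ is a real-analytic $\mR_n$-valued function on $\mR^n$, and that restrictions of entire monogenic functions are exactly those real-analytic functions whose homogeneous Taylor components, after applying the Fischer decomposition, rearrange into a series of the form $\sum_{j,k} \ux^j P_{k,j}(\ux)$ with the $P_{k,j}$ spherical monogenics of degree $k$. Concretely: write $f_0 = \sum_{m\in\mN_0} H_m$ where $H_m$ is the $m$-homogeneous part; apply the Fischer decomposition $H_m(\ux) = \sum_{k=0}^{m} \ux^{m-k} P_{k,m-k}(\ux)$ with $P_{k,m-k}$ a $k$-homogeneous monogenic polynomial; and collect terms to get $f_0(\ux) = \sum_{j,k\in\mN_0} \ux^j P_{k,j}(\ux)$. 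Since $CK$ is an $\mR_n$-module isomorphism of $\underline{\cM}_0$ onto $\cM_0$ and $CK(\ux^j P_k(\ux)) = X_k^{(j)}(x) P_k(\ux)$ by Lemma \ref{lckxj}, applying $CK$ termwise yields the candidate expansion \eqref{e-Taylor_m}; uniqueness of the $P_{k,j}$ follows from uniqueness of the homogeneous Taylor components of $f_0$ together with uniqueness in the Fischer decomposition.

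The analytic heart of the argument is the equivalence between the convergence of these series and the root condition \eqref{Pkj}. For the ``only if'' direction, I would use that $f_0$ real-analytic and entire (extends to all of $\mR^n$, hence to $\cM_0$) forces its homogeneous Taylor coefficients to decay super-geometrically; combined with the two-sided bound of Lemma \ref{l-estimate} applied to the norms of the Fischer pieces $\ux^j P_{k,j}$ (noting $\|\ux^j P_{k,j}\|$ on $S^{n-1}$ equals $\|P_{k,j}\|$ there since $|\ux|=1$ on the sphere), one gets $\limsup_{j+k\to\infty}\|P_{k,j}\|^{1/(j+k)} = 0$. Here one must be a little careful: the map $(j,k)\mapsto m=j+k$ is many-to-one, so I would argue that if $f_0$ is entire then for every $R>0$ the coefficients satisfy $\|H_m\|_{S^{n-1}} \leq C_R R^{-m}$, and then use the lower estimate of Lemma \ref{l-estimate} (more precisely the non-degeneracy of the Fischer decomposition, i.e. a lower bound $\|H_m\| \gtrsim c^m \max_{j+k=m}\|P_{k,j}\|$ with $c>0$ independent of $m$, which follows from Lemma \ref{l-estimate} with $m=(n-2)/2$ or a direct orthogonality argument) to push the decay down to the individual $P_{k,j}$.

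For the ``if'' direction, assuming \eqref{Pkj}, I would show the series \eqref{e-Taylor_m} converges normally on every ball $\{|x|\leq R\}$ in $\mR^{n+1}$: by Lemma \ref{l_est_EF},
$$
\sum_{j,k} |X_k^{(j)}(x)\, P_{k,j}(\ux)| \leq \sum_{j,k} b^{k+j} |x|^{j}\, \|P_{k,j}\|_{\infty}\, |\ux|^{k} \leq \sum_{j,k} (bR)^{j+k}\, \|P_{k,j}\|_{\infty},
$$
and \eqref{Pkj} makes the general term eventually bounded by $\varepsilon^{j+k}$ for any $\varepsilon>0$, in particular with $\varepsilon<1/(bR+1)$, so the double series converges absolutely and uniformly on $|x|\leq R$. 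Each term $X_k^{(j)}(x)P_{k,j}(\ux)$ is monogenic (it is a $CK$-extension), monogenicity is preserved under locally uniform limits, and $R$ is arbitrary, so the sum lies in $\cM_0$; restricting to $x_0=0$ recovers $f_0$, and the passage between the $L_2$- and sup-norm versions of \eqref{Pkj} is immediate since on the finite-dimensional space of degree-$k$ spherical monogenics the two norms are equivalent with constants that can be absorbed into the root (they grow at most polynomially in $k$). The main obstacle I anticipate is the bookkeeping in the ``only if'' direction — getting a clean, dimension-uniform lower bound relating $\|H_m\|$ to the norms of its Fischer components so that entire-ness of $f_0$ transfers to condition \eqref{Pkj} on each $P_{k,j}$; everything else is a routine application of Lemmas \ref{l-estimate} and \ref{l_est_EF} together with standard facts about $CK$-extensions and normal convergence of monogenic series.
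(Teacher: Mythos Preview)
Your proposal is correct and follows essentially the same strategy as the paper: the ``if'' direction is identical (Lemma~\ref{l_est_EF} plus \eqref{Pkj} gives normal convergence of a series of monogenic terms), and the ``only if'' direction proceeds via homogeneous decomposition, Fischer decomposition, and CK-extension (Lemma~\ref{lckxj}).

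The one execution difference is precisely at the ``main obstacle'' you flag. You work on $S^{n-1}$ with the restriction $f_0=f|_{x_0=0}$ and need to bound each $\|P_{k,j}\|$ by $\|H_{j+k}\|$; the paper instead works one dimension up, expanding $f=\sum_k f_k$ into $k$-homogeneous monogenics on $\mR^{n+1}$ with $\limsup_k\|f_k\|^{*1/k}=0$ on $S^n$ (citing \cite{sommen4}), and then invokes the $L_2(S^n)$-\emph{orthogonality} of the branching decomposition $f_k=\sum_j X^{(j)}_{k-j}P_{k-j,j}$ \cite[Thm.~2.2.3, p.~315]{dss} to get $\|X^{(j)}_{k-j}P_{k-j,j}\|^*_{L_2}\le\|f_k\|^*_{L_2}$ immediately, after which restriction to $x_0=0$ and passage to the sup norm finish the job. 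Your alternative route also works (the Fischer pieces $\uo^j P_{m-j,j}(\uo)$, $j=0,\dots,m$, are indeed pairwise $L_2(S^{n-1})$-orthogonal, being spherical harmonics of distinct degrees or lying in the orthogonal summands $\cM_l\oplus\uo\cM_{l-1}$), so $\|P_{k,j}\|_{L_2}\le\|H_{j+k}\|_{L_2}$ with constant~$1$; but Lemma~\ref{l-estimate} is not the right tool for this step, and you should drop that suggestion in favor of the orthogonality argument.
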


\begin{proof}
It is well-known that the condition \eqref{Pkj} for the norm $\|\cdot\|_{L_2}$ is equivalent to that for the norm $\|\cdot\|_{\infty}$, see e.g.\ \cite[p. 28]{mor2}. Indeed, $P_{k,j}$ are (componentwise) $k$-homogeneous spherical harmonics.
Moreover,  \eqref{Pkj} is obviously equivalent to the condition that,
for any  $a>0$, there exists a~positive constant $C$ such that
\begin{equation}\label{Pkj_2}
\|P_{k,j}\|\leq C\, a^{k+j}
\end{equation}
for all $j,k\in\mN_0$.

\medskip\noindent
(i) Let $f$ have an expansion \eqref{e-Taylor_m}. Then the series in \eqref{e-Taylor_m} converges absolutely and locally uniformly on $\mR^{n+1}$. Indeed, by Lemma \ref{l_est_EF} and \eqref{Pkj_2}, we have that
$$|X^{(j)}_k(x) P_{k,j}(\ux)|\leq b^{k+j}|x|^j|\ux|^k \|P_{k,j}\|_{\infty}\leq (b|x|)^{k+j}\|P_{k,j}\|_{\infty}.$$
All the summands in \eqref{e-Taylor_m} are monogenic on $\mR^{n+1}$ and so is the sum $f$.

\medskip \noindent
(ii) By \cite{sommen4}, a~given function $f\in\cM_0$ has a~unique expansion of the form
\begin{equation}\label{e-Homog_m}
f(x)=\sum_{k=0}^{\infty} f_k(x)
\end{equation}
for some $k$-homogeneous monogenic polynomials $f_k:\mR^{n+1}\to\mR_n$ satisfying
\begin{equation}\label{fk}
\limsup_{k\to\infty} \sqrt[k]{\|f_k\|^*}=0.
\end{equation}
Here $\|\cdot\|^*$ is the $L_2$-norm $\|\cdot\|^*_{L_2}$ or the supremum norm $\|\cdot\|^*_{\infty}$ on the sphere $S^{n}$.
By the monogenic Fischer decomposition (see \cite[Sect. 1.10]{dss}), we have that
$$f_k(\ux)=\sum_{j=0}^k \ux^j P_{k-j,j}(\ux)$$
for some $(k-j)$-homogeneous monogenic polynomials $P_{k-j,j}:\mR^n\to\mR_n$ and, by applying the CK extension operator, we get
\begin{equation}\label{branching}
f_k(x)=CK(f_k(\ux))=\sum_{j=0}^k X^{(j)}_{k-j}(x) P_{k-j,j}(\ux),
\end{equation}
which gives the expansion \eqref{e-Taylor_m}. It only remains to verify the estimate \eqref{Pkj}. By \eqref{fk}, for a~given $a>0$,
there is $C>0$ such that
$$Ca^k\geq \|f_k\|^*_{L_2}\geq \|X^{(j)}_{k-j}(x) P_{k-j,j}(\ux)\|^*_{L_2},\ k\geq j\geq 0.$$
Here we use the fact that the decomposition \eqref{branching} is orthogonal with respect to the $L_2$-inner product, see \cite[Theorem 2.2.3, p.\ 315]{dss}.
As we know we have an analogous estimate (with possibly different constant $C$) for the supremum norm. Hence, we can assume that, for all $k\geq j\geq 0$,
$$Ca^k\geq \|X^{(j)}_{k-j}(x) P_{k-j,j}(\ux)\|^*_{\infty}\geq \|\ux^j P_{k-j,j}(\ux)\|_{\infty}=\|P_{k-j,j}\|_{\infty},$$
which concludes the proof.
\end{proof}

To obtain Laurent series expansions for monogenic functions of $\cM_{\infty}$ we use the inversion.
It is well-known that the inversion $I_{n+1}$, defined by
$$
[I_{n+1}(f)](x)=\frac{\bar{x}}{|x|^{n+1}}f\left(\frac{x}{|x|^2}\right),
$$
is an isomorphism of the $\mathbb R_n$-module $\cM_0$ onto $\cM_{\infty}$.
Hence a~function $f$ belongs to ${\cM}_\infty$ if and only if it has an expansion of the form
\begin{equation}\label{e-Laurent_m}
f(x)=\frac{\bar x}{|x|^{n+1}}\sum_{j,k\in \mN_0} \frac{X^{(j)}_k(x)  P_{k,j}(\underline{x})}{|x|^{2(j+k)}}
\end{equation}
where $P_{k,j}$ are as in \eqref{Pkj}.
Obviously, each function $f\in\cM_\infty$ is again uniquely determined by its initial datum $f|_{x_0=0}$.

\subsection{Slice monogenic functions}

First we describe Laurent series expansions for slice monogenic functions.

\begin{theorem}\label{t-Laurent}

\begin{itemize}
\item[(i)]
A~function $f$ belongs to $\cS$ if and only if it has a~unique expansion of the form
\begin{equation}\label{LaurentS}
f(x_0,p,\uo)=\sum_{j\in\mZ,k\in \mN_0}(x_0+\uo p)^j \uo^k P_{k,j}(\uo)
\end{equation}
for some $k$-homogeneous monogenic polynomials $P_{k,j}:\mR^n\to\mR_n$ which satisfy the following condition:
For each $m\in\mN_0$ and each $a>0$, there is $C>0$ such that
\begin{equation}\label{Pkj_weak}
\|P_{k,j}\|\leq C\, a^{|j|} (k+1)^{-m}
\end{equation}
for all $k\in\mN_0$ and $j\in \mZ$.
Here $\|\cdot\|$ is the $L_2$-norm  $\|\cdot\|_{L_2}$ or the supremum norm $\|\cdot\|_{\infty}$ on the sphere $S^{n-1}$.

\item[(ii)]
Moreover, a~function $f$ belongs to $\cS_0$ (resp.\ $\cS_{\infty}$) if and only if it has a~unique expansion of the form \eqref{LaurentS}
with $P_{k,j}=0$ unless $j\geq 0$ (resp.\ $j<0$). In particular, we have that $\cS=\cS_0\oplus\cS_{\infty}$ and $\cS_{\infty}=I_2(\cS_0)$.
\end{itemize}

\end{theorem}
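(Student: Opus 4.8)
The plan is to reduce the whole statement to classical one–variable complex analysis on each plane $\mathbb C_{\uo}$, and then to control the dependence on the parameter $\uo\in S^{n-1}$. Fix $\uo\in S^{n-1}$. The equation $\partial_{\uo}f=0$ is exactly the Cauchy--Riemann equation on $\mathbb C_{\uo}$ with $\uo$ in the role of the imaginary unit, so $f\in\cS$, restricted to $(\mathbb R^{2}\setminus\{0\})\cong\mathbb C_{\uo}\setminus\{0\}$, is holomorphic on the punctured plane and hence has a unique Laurent expansion
$$f(x_{0},p,\uo)=\sum_{j\in\mathbb Z}(x_{0}+\uo p)^{j}\,b_{j}(\uo),\qquad b_{j}(\uo)=\frac{r^{-j}}{2\pi}\int_{0}^{2\pi}e^{-\uo j\theta}\,f(r\cos\theta,r\sin\theta,\uo)\,d\theta\ \ (r>0),$$
where $x_{0}+\uo p$ is invertible, with inverse $(x_{0}-\uo p)/(x_{0}^{2}+p^{2})$, whenever $(x_{0},p)\ne(0,0)$. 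Smoothness of $f$ makes each $b_{j}$ a smooth $\mathbb R_{n}$-valued function on $S^{n-1}$, and evenness of $f$, together with $\bigl(x_{0}+(-\uo)(-p)\bigr)^{j}=(x_{0}+\uo p)^{j}$ and the substitution $\theta\mapsto-\theta$, gives $b_{j}(-\uo)=b_{j}(\uo)$. Next I would expand each even $b_{j}$ in spherical monogenics: every smooth (or $L_{2}$) $\mathbb R_{n}$-valued function on $S^{n-1}$ has a unique expansion $\sum_{k}\bigl(A_{k}(\uo)+\uo B_{k}(\uo)\bigr)$ with $A_{k},B_{k}$ restrictions of $k$-homogeneous monogenic polynomials; evenness kills the odd-degree $A_{k}$ and the even-degree $B_{k}$, and, since $\uo^{2}=-1$ on $S^{n-1}$, the surviving terms rearrange uniquely as $\sum_{k\ge0}\uo^{k}P_{k,j}(\uo)$ with $P_{k,j}$ a $k$-homogeneous monogenic polynomial. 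Substituting into the Laurent series gives \eqref{LaurentS}, with uniqueness inherited from uniqueness of Laurent coefficients and of the spherical expansion.

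The main point — and the only real obstacle — is the growth condition \eqref{Pkj_weak}. From the formula for $b_{j}$, taking $r$ large when $j\ge0$ and small when $j<0$, one gets, uniformly in $\uo$ and for every $a>0$, an estimate $\|b_{j}\|_{\infty}\le C(a)\,a^{|j|}$; this is where the two–sided entireness of $f$ in the plane variable enters, and it yields decay in $j$ but none in $k$. Complementarily, applying powers of the spherical Laplacian $\Delta_{S^{n-1}}$ in $\uo$ under the integral, the Leibniz rule produces only bounded $\uo$-derivatives of $e^{-\uo j\theta}$ — since $\cos j\theta$ is $\uo$-independent and $\uo\mapsto\uo$ restricts from a degree-one harmonic — so $\Delta_{S^{n-1}}$ creates no growth in $j$; smoothness of $f$ on the fixed compact set $\{x_{0}^{2}+p^{2}=1\}\times S^{n-1}$ then gives $\|\Delta_{S^{n-1}}^{s}b_{j}\|_{\infty}\le C_{s}$ uniformly in $j$, and via $\|\pi_{l}g\|_{L_{2}}\le\bigl(l(l+n-2)\bigr)^{-s}\|\Delta_{S^{n-1}}^{s}g\|_{L_{2}}$ for the orthogonal projection $\pi_{l}$ onto degree-$l$ spherical harmonics ($l\ge1$), one gets $\|\pi_{l}b_{j}\|_{L_{2}}\le C_{s}(1+l)^{-2s}$ uniformly in $j$. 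Taking the geometric mean of the two bounds and then shrinking $a$ turns the polynomial $j$-decay into $a^{|j|}$ while keeping an arbitrary $(1+k)^{-m}$; since $P_{k,j}$ is, up to a bounded factor, a monogenic component of $\pi_{k}b_{j}$ and $\pi_{k+1}b_{j}$, this proves \eqref{Pkj_weak} for $\|\cdot\|_{L_{2}}$, and the equivalence of $\|\cdot\|_{L_{2}}$ and $\|\cdot\|_{\infty}$ on spherical monogenics of degree $k$ up to a polynomial factor in $k$ (exactly as used in the proof of Theorem~\ref{t-Taylor_m}) gives it for $\|\cdot\|_{\infty}$. Conversely, given $\{P_{k,j}\}$ satisfying \eqref{Pkj_weak}, on $\{r_{1}\le\sqrt{x_{0}^{2}+p^{2}}\le r_{2}\}\times S^{n-1}$ one has $|(x_{0}+\uo p)^{j}\uo^{k}P_{k,j}(\uo)|\le r_{2}^{\max(j,0)}r_{1}^{\min(j,0)}\|P_{k,j}\|_{\infty}$, so choosing $a$ small and $m$ large shows \eqref{LaurentS} converges absolutely and locally uniformly, together with all derivatives (the powers of $j$ and $k$ produced by differentiation being absorbed); and each summand is even and slice monogenic because, for a constant $c\in\mathbb R_{n}$, $\partial_{\uo}\bigl((x_{0}+\uo p)^{j}c\bigr)=\tfrac{j}{2}(x_{0}+\uo p)^{j-1}(c+\uo^{2}c)=0$. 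This proves (i); the argument runs parallel to, and is a little simpler than, the proof of Theorem~\ref{t-Taylor_m}.

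For (ii): on $\mathbb C_{\uo}$ a slice monogenic function extends holomorphically across the origin precisely when its Laurent expansion has no negative powers of $x_{0}+\uo p$, i.e.\ $b_{j}\equiv0$, equivalently $P_{k,j}=0$, for all $j<0$; together with the convergence discussion above (now valid on all of $\mathbb R^{2}\times S^{n-1}$, origin included) this identifies $\cS_{0}$ with the expansions \eqref{LaurentS} supported on $j\ge0$. A direct computation gives that in the plane variable $z=x_{0}+\uo p$ the inversion is $[I_{2}(f)](z,\uo)=z^{-1}f(z^{-1},\uo)$, so $I_{2}$ sends $(x_{0}+\uo p)^{j}c$ to $(x_{0}+\uo p)^{-j-1}c$; hence $I_{2}$ is a bijection between expansions supported on $j\ge0$ and those supported on $j<0$, it preserves the growth condition \eqref{Pkj_weak} (since $|-j-1|$ and $|j|$ differ by at most one), it preserves evenness and slice monogenicity, and $I_{2}^{2}=\mathrm{id}$. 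Thus $\cS_{\infty}=I_{2}(\cS_{0})$ is exactly the set of expansions \eqref{LaurentS} supported on $j<0$. Finally, splitting an arbitrary $f\in\cS$ into its $j\ge0$ and $j<0$ parts — each still satisfying \eqref{Pkj_weak} — gives $f=f_{0}+f_{\infty}$ with $f_{0}\in\cS_{0}$, $f_{\infty}\in\cS_{\infty}$, while $\cS_{0}\cap\cS_{\infty}=\{0\}$ by uniqueness of \eqref{LaurentS}; hence $\cS=\cS_{0}\oplus\cS_{\infty}$.
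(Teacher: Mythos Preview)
Your proof is correct and follows essentially the same route as the paper: Laurent-expand in the plane variable via the Cauchy integral to obtain smooth even coefficients $b_j(\uo)=C_j(\uo)$, expand each on the sphere into monogenics, and derive \eqref{Pkj_weak} by applying powers of $\Delta_{S^{n-1}}$ under the Cauchy integral (your key observation that $\Delta_{S^{n-1}}$ acting on $e^{-\uo j\theta}=\cos j\theta-\uo\sin j\theta$ produces no growth in $j$ is precisely what the paper exploits). The only difference is that the paper applies $\Delta_{\uo}^m$ to the Cauchy formula at an arbitrary radius $r$ and reads off both factors $r^{-j}$ and $k^{-2m}$ simultaneously, while you establish the $j$-decay and the $k$-decay separately and recombine them by a geometric mean---an unnecessary but harmless detour, since your own observation already yields the one-step bound $\|\Delta_{\uo}^{m}b_j\|\le D(m,r)\,r^{-j}$.
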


\begin{proof}
(i) If a~function $f$ has an expansion of the form \eqref{LaurentS} then  $f$ obviously belongs to $\cS$. Indeed, by \cite[Theorem 4]{see},
the series in  \eqref{LaurentS} converges in the $\cC^{\infty}$ sense.

So let us assume that $f\in\cS$. Let $\uo\in S^{n-1}$ be fixed.
Then it is well-known that the Clifford algebra $\mR_n$ is a~finite dimensional vector space over $\mC_{\uo}$.
Thus the values of $f$ can be decomposed into  $\mC_{\uo}$-valued components which, by our assumptions,  are holomorphic functions on $\mC_{\uo}\setminus\{0\}$ in the usual sense (cfr. with \cite[Splitting Lemma]{css})
and they have unique Laurent series expansions. Hence we can expand $f$ uniquely as
\begin{equation}\label{LaurentS_1}
f(x_0,p,\uo)=\sum_{j\in\mZ}(x_0+\uo p)^j C_j(\uo)
\end{equation}
where the coefficients $C_j(\uo)$ are given by the Cauchy integral
\begin{equation}\label{Cauchy}
C_j(\uo)=\frac{1}{2\pi r^j}\int_0^{2\pi} (\cos(jt)-\uo\sin(jt))\; f(r\cos t,r\sin t,\uo) \; dt
\end{equation}
for any $r>0$.
In particular, the coefficients $C_j(\uo)$ are smooth functions on $S^{n-1}$.
Denote by $\Delta_{\uo}$ the Laplace-Beltrami operator on the sphere $S^{n-1}$.
Let $j\in\mZ$, $m\in\mN_0$ and $r>0$ be given. Then, by applying $\Delta_{\uo}^m$ to \eqref{Cauchy}, we obtain the estimates
\begin{equation}\label{CauchyEst}
\|\Delta_{\uo}^m C_j\|_{L_2}\leq\|\Delta_{\uo}^m C_j\|_{\infty}\leq D r^{-j}
\end{equation}
with $D=D(m)=\sup\{|\Delta_{\uo}^m f(x_0,p,\uo)|+|\Delta_{\uo}^m\; \uo f(x_0,p,\uo)|;\ x_0^2+p^2=r^2,\ \uo\in S^{n-1}  \}<\infty$.
Since $C_j$ is a~smooth function on $S^{n-1}$ it is well-known that it has a~unique expansion
\begin{equation}\label{HarmExp}
C_j(\uo)=\sum_{k=0}^{\infty} Y_{k,j}(\uo)
\end{equation}
into $k$-homogeneous spherical harmonics $Y_{k,j}$ which decrease rapidly as $k\to\infty$, and satisfying that
\begin{equation}\label{Ykj}
k^{2m}\|Y_{k,j}\|_{L_2}\leq\|\Delta_{\uo}^m C_j\|_{L_2},\ k,m\in\mN_0,
\end{equation}
see e.g.\ \cite[p. 36]{mor2}.
By \eqref{CauchyEst} and \eqref{Ykj}, it follows that the sequence $Y_{k,j}$ satisfy the estimate \eqref{Pkj_weak}.
Since $f$ is an even function so are the coefficients $C_j$ and, consequently, only $Y_{k,j}$ for even $k$ are non-zero.
Let $j\in\mZ$ and let $k\in\mN_0$ be even. Then it is well-known (see \cite[Cor. 1.33]{dss}) that $k$-homogeneous spherical harmonic $Y_{k,j}:S^{n-1}\to\mR_n$
decomposes uniquely as
\begin{equation}\label{YkjPkj}
Y_{k,j}(\uo)=P_{k,j}(\uo)+\uo P_{k-1,j}(\uo)=\uo^k (-1)^{k/2}P_{k,j}(\uo)+\uo^{k-1} (-1)^{(k-2)/2} P_{k-1,j}(\uo)
\end{equation}
for some spherical monogenics $P_{k,j}$ and $P_{k-1,j}$ of degree $k$ and $k-1$, respectively. Since this decomposition is orthogonal with respect to the $L_2$-inner product
we have that
$$\|Y_{k,j}\|_{L_2}^2=\|P_{k,j}\|_{L_2}^2+\|P_{k-1,j}\|_{L_2}^2,$$
which completes the proof of the statement (i). Indeed, for the given function $f\in\cS$, we found an expansion of the form \eqref{LaurentS}, see \eqref{LaurentS_1}, \eqref{HarmExp} and \eqref{YkjPkj}.

\medskip\noindent
(ii) It follows directly from (i). In particular, we have that $I_2((x_0+\uo p)^j)=(x_0+\uo p)^{-(j+1)}$ for $j\in\mZ$.
\end{proof}

\begin{corollary}\label{c-SM0}
The following statements are equivalent.
\begin{itemize}
\item[(i)]
A~function $f$ belongs to $\cS\cM_0$.
\item[(ii)]
A~function $f$ has a~unique expansion of the form \eqref{LaurentS} with $P_{k,j}=0$ unless $j\geq k$.
\item[(iii)]
A~function $f$ has a~unique expansion of the form
\begin{equation}\label{Taylor_sm}
f(x_0,p,\uo)=\sum_{j,k\in \mN_0}(x_0+\uo p)^{j+k} \uo^k \tilde P_{k,j}(\uo)
\end{equation}
for some $k$-homogeneous monogenic polynomials $\tilde P_{k,j}:\mR^n\to\mR_n$ satisfying the condition
\begin{equation}\label{Pkj_3}
\limsup_{j+k\to\infty} \sqrt[j+k]{\|\tilde P_{k,j}\|}=0.
\end{equation}
\end{itemize}
\end{corollary}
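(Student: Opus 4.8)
The plan is to prove the chain of equivalences $(i)\Leftrightarrow(iii)$ and $(ii)\Leftrightarrow(iii)$, using Theorem~\ref{t-Laurent} as the starting point in all cases. Throughout, I will use that a function $f\in\cS_0$ has, by Theorem~\ref{t-Laurent}(ii), a unique expansion $f(x_0,p,\uo)=\sum_{j\geq 0,\,k\in\mN_0}(x_0+\uo p)^j\uo^k P_{k,j}(\uo)$ with spherical monogenics $P_{k,j}$ of degree $k$ satisfying \eqref{Pkj_weak}.

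For $(ii)\Leftrightarrow(iii)$: this is a purely formal reindexing. A function $f\in\cS_0$ whose coefficients $P_{k,j}$ vanish unless $j\geq k$ can be rewritten by substituting $j\mapsto j+k$ (now with $j\geq 0$), producing the expansion \eqref{Taylor_sm} with $\tilde P_{k,j}=P_{k,j+k}$; conversely \eqref{Taylor_sm} is of the form \eqref{LaurentS} with $P_{k,j}=0$ unless $j\geq k$. The only real content is checking that, under this reindexing, the decay condition \eqref{Pkj_weak} (for all $m$, all $a>0$) on $\{P_{k,j}\}$ is equivalent to the root condition \eqref{Pkj_3} on $\{\tilde P_{k,j}\}$. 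In one direction: if \eqref{Pkj_3} holds, then for any $a>0$ there is $C$ with $\|\tilde P_{k,j}\|\leq C a^{j+k}$, so $\|P_{k,j+k}\|\leq C a^{j+k}$, i.e.\ $\|P_{k,l}\|\leq C a^{l}\leq C a^{|l|}$ for $l\geq k$; choosing $a<1$ gives both the exponential decay in $|j|=l$ and (since $a^{l}\leq a^{k}(k+1)^{-m}\cdot\text{const}$ for $a<1$, because $a^{k}$ beats any power of $k$) the required polynomial decay in $k$ with any leftover $a^{j}$ factor absorbed. Conversely, if \eqref{Pkj_weak} holds for all $m$ and all $a$, then for the reindexed coefficients, $\|\tilde P_{k,j}\|=\|P_{k,j+k}\|\leq C a^{j+k}(k+1)^{-m}\leq C a^{j+k}$; since $a>0$ is arbitrary this yields $\limsup_{j+k\to\infty}\|\tilde P_{k,j}\|^{1/(j+k)}\leq a$ for every $a>0$, hence \eqref{Pkj_3}. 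So $(ii)\Leftrightarrow(iii)$.

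For $(i)\Leftrightarrow(iii)$: by Definition~\ref{d-SM0}, $f\in\cS\cM_0$ iff $f\in\cS_0$ and $f(0,p,\uo)=f_0(p\uo)$ for some $f_0\in\ucL_0$ --- wait, $\underline{\cM}_0$; that is, iff the restriction of $f$ to $x_0=0$ agrees, in spherical coordinates $\ux=p\uo$, with the restriction to $x_0=0$ of some entire monogenic function $g\in\cM_0$. By Theorem~\ref{t-Taylor_m}, every $g\in\cM_0$ has the unique expansion $g(x)=\sum_{j,k\in\mN_0}X^{(j)}_k(x)\tilde P_{k,j}(\ux)$ with \eqref{Pkj} holding for $\tilde P_{k,j}$. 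Restricting to $x_0=0$: since $X^{(0)}_k=1$, and for $j\geq 1$ the polynomial $X^{(j)}_k(x)$ at $x_0=0$ equals $\mu^j_k|\ux|^j C^{(n-1)/2+k}_j(0)$ plus a term with the odd Gegenbauer polynomial $C^{(n+1)/2+k}_{j-1}$ evaluated at $0$ times $\ux/|\ux|$, I need the explicit value $X^{(j)}_k|_{x_0=0}(\ux)=\ux^j$ --- indeed this is immediate from Lemma~\ref{lckxj}, since $CK(\ux^j P_k)|_{x_0=0}=\ux^j P_k$ by the defining property of the CK-extension, so $X^{(j)}_k|_{x_0=0}(\ux)\,P_k(\ux)=\ux^j P_k(\ux)$ and hence $X^{(j)}_k|_{x_0=0}(\ux)=\ux^j$ as a multiplier on spherical monogenics. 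Therefore $g|_{x_0=0}(p\uo)=\sum_{j,k}(p\uo)^j\tilde P_{k,j}(p\uo)=\sum_{j,k}p^{j+k}\uo^j\tilde P_{k,j}(\uo)$ using $k$-homogeneity of $\tilde P_{k,j}$. On the other hand, by Theorem~\ref{t-Laurent}(ii) written at $x_0=0$ with $\ux=p\uo$, $f(0,p,\uo)=\sum_{j\geq 0,k}p^j\uo^{j}\uo^k P_{k,j}(\uo)$. Comparing the two expansions in the (linearly independent) spherical monogenic components requires grouping by total power of $p$ and by the monogenic/vector-times-monogenic splitting; this is exactly the content that matching $f(0,p,\uo)=g(0,p,\uo)$ forces $P_{k,j}=0$ unless $j\geq k$, and when $j\geq k$ it identifies $P_{k,j}$ with $\tilde P_{k,j-k}$, i.e.\ gives precisely the expansion \eqref{Taylor_sm}. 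Conversely, given \eqref{Taylor_sm} with \eqref{Pkj_3}, the function $g:=\sum_{j,k}X^{(j)}_k(x)\tilde P_{k,j}(\ux)$ lies in $\cM_0$ by Theorem~\ref{t-Taylor_m}, and $g|_{x_0=0}(p\uo)=f(0,p,\uo)$ by the same computation, so $f\in\cS\cM_0$. This closes $(i)\Leftrightarrow(iii)$.

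The main obstacle I anticipate is the bookkeeping in $(i)\Leftrightarrow(iii)$: one must be careful that the restriction $X^{(j)}_k|_{x_0=0}=\ux^j$ is used correctly, that the passage between $\uo^k$-normalized spherical monogenics and plain spherical monogenics (the sign factors $(-1)^{k/2}$ appearing in \eqref{YkjPkj}) is handled consistently, and above all that the two series --- the slice-monogenic Laurent series of $f$ and the CK/Fischer series of the candidate $g$ --- are compared term by term using the \emph{uniqueness} of both expansions (Fischer decomposition on the $\mR^n$ side, Laurent plus spherical-harmonic decomposition on the slice side). The analytic growth conditions are comparatively easy once the algebraic matching $P_{k,j}\leftrightarrow\tilde P_{k,j-k}$ is in place, since both \eqref{Pkj} and \eqref{Pkj_weak}-restricted-to-$j\geq k$ reduce to the single root condition \eqref{Pkj_3} after the shift, as shown above.
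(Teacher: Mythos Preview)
Your approach is correct and essentially identical to the paper's: both establish $(ii)\Leftrightarrow(iii)$ by the reindexing $\tilde P_{k,j}=P_{k,j+k}$ together with a comparison of the growth conditions \eqref{Pkj_weak} and \eqref{Pkj_3}, and both establish $(i)\Leftrightarrow(iii)$ by invoking Theorem~\ref{t-Taylor_m} for the initial datum $f_0\in\underline{\cM}_0$, writing $f_0(p\uo)=\sum_{j,k}(p\uo)^{j+k}\uo^k(-1)^k\tilde P_{k,j}(\uo)$, and using uniqueness of the slice extension. Your anticipated sign factor $(-1)^k$ is exactly the one the paper absorbs into $\tilde P_{k,j}$, so your bookkeeping concern is well founded but easily resolved.
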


\begin{proof}
(iii)$\Rightarrow$(ii): Let $f$ have an expansion of the form \eqref{Taylor_sm}. Then we have that
$$f(x_0,p,\uo)=\sum_{j\geq k}(x_0+\uo p)^{j} \uo^k P_{k,j}(\uo)$$
where $P_{k,j}=\tilde P_{k,j-k}$. Let $a\in (0,1)$ and $C>0$ be such that $\|\tilde P_{k,j}\|\leq Ca^{j+k}$ for each $j,k\in\mN_0$. Then we have that
$\|P_{k,j}\|=\| \tilde P_{k,j-k}\|\leq Ca^{j}\leq C(\sqrt{a})^{j+k}$ for each $j,k\in\mN_0$, $j\geq k$, which finishes the proof.
Moreover, it is immediate to see that the converse (ii)$\Rightarrow$(iii) is also true.

\medskip\noindent
(i)$\Rightarrow$(iii):
Recall that $\cS\cM_0$ is introduced in Definition \ref{d-SM0}.
Let $f_0\in\underline{\cM}_0$, that is, $f_0$ is a~restriction of some entire monogenic function on $\mR^{n+1}$ to the hyperplane $x_0=0$. By Theorem \ref{t-Taylor_m}, we have that
$$f_0(\ux)=\sum_{j,k\in\mN_0} \ux^j \tilde P_{k,j}(\ux)$$
for some $k$-homogeneous monogenic polynomials $\tilde P_{k,j}:\mR^n\to\mR_n$ satisfying the condition \eqref{Pkj_3}.
Using spherical coordinates $\ux=p\uo$, we get the function
$$f_0(p\uo)=\sum_{j,k\in\mN_0} (p\uo)^{j+k} \uo^k (-1)^k \tilde P_{k,j}(\uo).$$
Moreover, there is a~unique function $f\in\cS_0$ such that $f|_{x_0=0}=f_0$, namely,
$$f(x_0,p,\uo)=\sum_{j,k\in\mN_0} (x_0+p\uo)^{j+k} \uo^k (-1)^k \tilde P_{k,j}(\uo).$$
Obviously, the converse (iii)$\Rightarrow$(i) is valid as well.
\end{proof}

Recalling that $\cS\cM_{\infty}=I_2(\cS\cM_0)$, we obtain the following result.

\begin{corollary}\label{c-SM8}
The following statements are equivalent to each other.
\begin{itemize}
\item[(i)]
A~function $f$ belongs to $\cS\cM_{\infty}$.
\item[(ii)]
A~function $f$ has a~unique expansion of the form \eqref{LaurentS} with $P_{k,j}=0$ unless $j< -k$.
\item[(iii)]
A~function $f$ has a~unique expansion of the form
\begin{equation}\label{Laurent_sm}
f(x_0,p,\uo)=\sum_{j,k\in \mN_0}(x_0+\uo p)^{-(j+k+1)} \uo^k \tilde P_{k,j}(\uo)
\end{equation}
where $\tilde P_{k,j}$ are as in \eqref{Pkj_3}.
\end{itemize}
\end{corollary}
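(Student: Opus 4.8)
The plan is to deduce Corollary \ref{c-SM8} from Corollary \ref{c-SM0} by simply transporting the three equivalent characterizations of $\cS\cM_0$ through the inversion $I_2$, using the key computation $I_2((x_0+\uo p)^j)=(x_0+\uo p)^{-(j+1)}$ recorded in the proof of Theorem \ref{t-Laurent}(ii). Since $\cS\cM_{\infty}$ is defined as $I_2(\cS\cM_0)$ and $I_2$ is an $\mathbb R_n$-module isomorphism of $\cS_0$ onto $\cS_\infty$ (again Theorem \ref{t-Laurent}(ii)), a function $f$ belongs to $\cS\cM_{\infty}$ if and only if $I_2(f)$ (equivalently $I_2^{-1}(f)$, as $I_2^2=\mathrm{id}$ on these modules) belongs to $\cS\cM_0$. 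So each statement below will be obtained by applying $I_2$ termwise to the corresponding statement in Corollary \ref{c-SM0}.

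First I would establish (i)$\Leftrightarrow$(iii). Take $f\in\cS\cM_{\infty}$, so $g:=I_2(f)\in\cS\cM_0$; by Corollary \ref{c-SM0}(iii), $g(x_0,p,\uo)=\sum_{j,k\in\mN_0}(x_0+\uo p)^{j+k}\uo^k\tilde P_{k,j}(\uo)$ with $\tilde P_{k,j}$ satisfying \eqref{Pkj_3}. Applying $I_2$ term by term and using $I_2((x_0+\uo p)^{j+k})=(x_0+\uo p)^{-(j+k+1)}$ together with the fact that the scalar prefactor $(x_0-\uo p)/(x_0^2+p^2)$ in the definition of $I_2$ is exactly $(x_0+\uo p)^{-1}$ on $\mC_{\uo}$, one gets precisely the expansion \eqref{Laurent_sm}. (Here one should note that $I_2$ acts only on the $\mC_{\uo}$-variable and leaves the spherical-monogenic factor $\uo^k\tilde P_{k,j}(\uo)$ untouched, and that the termwise application is justified because, by Theorem \ref{t-Laurent}, the series converges in the $\cC^\infty$ sense and $I_2$ is continuous.) Conversely, any $f$ of the form \eqref{Laurent_sm} is $I_2$ of a function of the form \eqref{Taylor_sm}, hence lies in $\cS\cM_{\infty}$.

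Next I would do (i)$\Leftrightarrow$(ii) in the same spirit, now invoking Corollary \ref{c-SM0}(ii): $g\in\cS\cM_0$ iff $g$ has an expansion \eqref{LaurentS} with $P_{k,j}=0$ unless $j\geq k$. Since $I_2$ sends the summand $(x_0+\uo p)^j\uo^k P_{k,j}(\uo)$ to $(x_0+\uo p)^{-(j+1)}\uo^k P_{k,j}(\uo)$, the reindexing $j\mapsto -(j+1)$ turns the constraint $j\geq k$ into $-(j+1)\geq k$, i.e.\ $j\leq -(k+1)$, i.e.\ $j<-k$; and the decay condition \eqref{Pkj_weak} is preserved since $|{-(j+1)}|$ differs from $|j|$ by at most $1$. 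This gives exactly statement (ii). Alternatively (ii)$\Leftrightarrow$(iii) can be checked directly by the same elementary reindexing argument that appears in the proof of Corollary \ref{c-SM0}, bypassing $I_2$ for that particular implication.

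I do not anticipate a genuine obstacle here: the corollary is a routine transcription of Corollary \ref{c-SM0} under the inversion, and every ingredient (the module isomorphism $I_2\colon\cS_0\to\cS_\infty$, the identity $I_2((x_0+\uo p)^j)=(x_0+\uo p)^{-(j+1)}$, termwise convergence from Theorem \ref{t-Laurent}) is already available in the excerpt. The only point deserving a line of care is the bookkeeping that $I_2$ affects solely the holomorphic variable and preserves the growth/decay estimates on the coefficients $P_{k,j}$ up to the harmless shift $|j|\mapsto|j|\pm 1$; once that is spelled out, the three equivalences follow immediately.
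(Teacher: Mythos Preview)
Your proposal is correct and follows exactly the approach the paper takes: the paper simply writes ``Recalling that $\cS\cM_{\infty}=I_2(\cS\cM_0)$, we obtain the following result'' and states Corollary \ref{c-SM8} without further proof, relying on the identity $I_2((x_0+\uo p)^j)=(x_0+\uo p)^{-(j+1)}$ from Theorem \ref{t-Laurent}(ii) to transport the three characterizations of Corollary \ref{c-SM0} termwise. Your write-up just makes this transport explicit, including the harmless index shift and the preservation of the coefficient estimates, so nothing more is needed.
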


\section{The dual Radon transform}\label{s-dRt}

Let us recall that the dual Radon transform is defined as follows.

\begin{definition}\label{defDual}
Let $\phi=\phi(p,\uo)$ be a~smooth (complex or $\mR_n$-valued) function on $ \mR\times S^{n-1}.$ For
$\ux\in\mR^n$, we define
$$
S[\phi](\ux):=\frac{1}{A_n}\int_{S^{n-1}}\phi((\ux,\uo), \uo)\;d\uo.
$$
Here $(\cdot,\cdot)$ is the scalar product on $\mR^n$, $d\uo$ is the area element on the sphere $S^{n-1}$ and $A_n$ is the area of $S^{n-1}$.
\end{definition}

It is clear that the dual Radon transform $S$ vanishes on odd functions $\phi$ (that is, when $\phi(-p,-\uo)=-\phi(p,\uo)$) and it preserves the degree of homogeneity.
Indeed, if $\phi=\phi(p,\uo)$ is a~homogeneous function of degree $\al\in\mC$ (that is, for $\lambda >0$, we have that $\phi(\la p,\uo)=\la^{\al}\phi(p,\uo)$), so is $S[\phi]$.
To prove our first main theorem we need to show the following auxiliary results.

\begin{lemma}\label{SS0}
For each $f\in\cS_0$, we have that $S[f]\in\cM_0$.
\end{lemma}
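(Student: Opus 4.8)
The plan is to use the Taylor series characterization of $\cS_0$ from Theorem~\ref{t-Laurent}(ii) together with Lemma~\ref{lckxj}, which tells us exactly what the building blocks of $\cM_0$ look like. The key observation is that the dual Radon transform is taken with respect to the spatial variable only ($x_0$ is a parameter), so it commutes with the Cauchy--Kovalevskaya extension in the $x_0$-direction; this reduces the whole statement to an identity about initial data on $x_0=0$.

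First I would take $f\in\cS_0$ and write, by Theorem~\ref{t-Laurent}(ii),
$$f(x_0,p,\uo)=\sum_{j\geq 0,\,k\geq 0}(x_0+\uo p)^j\uo^k P_{k,j}(\uo)$$
with the $P_{k,j}$ satisfying \eqref{Pkj_weak} (hence a fortiori the estimate needed for locally uniform convergence together with all derivatives). Applying $S$ termwise with $x_0$ frozen, and using the fact that $S$ commutes with multiplication by any power of $x_0$, the heart of the matter is the single-term computation
$$S\!\left[(x_0+\uo p)^j\uo^k P_{k,j}(\uo)\right]\!(x_0,\ux)=\frac1{A_n}\int_{S^{n-1}}(x_0+\uo(\ux,\uo))^j\,\uo^k P_{k,j}(\uo)\,d\uo.$$
Expanding $(x_0+\uo(\ux,\uo))^j$ by the binomial theorem (the factors commute, as $x_0$ is scalar and $\uo$ is a fixed 1-vector) reduces this to integrals of the form $\int_{S^{n-1}}(\ux,\uo)^i\,\uo^{k+j-i}P_{k,j}(\uo)\,d\uo$, i.e. to dual Radon transforms of homogeneous functions on the sphere. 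By the Funk--Hecke-type computations underlying Lemma~\ref{lckxj} (which is precisely the statement that $CK(\ux^{k+j-i}P_{k,j}(\ux))$ and its restriction are governed by Gegenbauer polynomials), each such integral is a scalar multiple of $|\ux|^{\,i-?}\,\ux^{\,?}P_{k,j}(\ux)$-type data, and reassembling them in $x_0$ shows that $S[f]$ coincides with a convergent series of terms of the form $X^{(m)}_{k}(x)P_{k,j}(\ux)$. The cleanest way to organize this is: show directly that $S$ maps the single generating term $(x_0+\uo p)^j\uo^k P_{k,j}(\uo)$ into $\cM_0$ by checking that its restriction to $x_0=0$ lies in $\underline{\cM}_0$ and invoking that $S$ intertwines the two $x_0$-CK-extensions — which is Lemma~4.2 of the paper, $\cD\circ S = S\circ\pp_{\uo}$, applied to conclude that $S[f]$ is annihilated by $\cD$.

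Concretely, the argument I would actually write is shorter: (1) By Lemma~4.2 (quoted in the introduction as $S[\pp_{\uo}f]=\cD S[f]$), if $f\in\cS_0$ then $\cD S[f]=0$ wherever $S[f]$ is defined, so $S[f]$ is monogenic; (2) the convergence estimates \eqref{Pkj_weak} on the $P_{k,j}$, combined with $|(\ux,\uo)|\leq|\ux|$ and $|\uo^k|=1$, show that the termwise-integrated series converges absolutely and locally uniformly in $(x_0,\ux)\in\mR^{n+1}$, together with all its derivatives, so $S[f]$ is a well-defined real-analytic function on all of $\mR^{n+1}$; (3) a real-analytic monogenic function on all of $\mR^{n+1}$ is by definition in $\cM_0$. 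The only point requiring care is step (1): Lemma~4.2 must be applicable, i.e. differentiation under the integral sign in the definition of $S$ must be justified — but this again follows from the uniform convergence in step (2).

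\textbf{Main obstacle.} The essential difficulty is purely one of convergence and legitimacy of interchanging $S$ with differentiation and with the infinite sum: the algebraic identity $\cD S = S\pp_{\uo}$ is easy, but to deduce $S[f]\in\cM_0$ rather than merely ``$S[f]$ is monogenic where defined'' one must confirm that $S[f]$ extends analytically to the whole space $\mR^{n+1}$, including through $\ux=0$ and $x_0=0$. This is where the rapid-decay condition \eqref{Pkj_weak} in $k$ (faster than any polynomial) is used, exactly as in the proof of Theorem~\ref{t-Taylor_m}: it guarantees that after binomial expansion in $x_0$ the resulting double series in $(j,k)$ still converges locally uniformly in $\mR^{n+1}$. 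I do not expect any genuinely hard computation — the Gegenbauer identities are all imported from Lemma~\ref{lckxj} — but the bookkeeping of the binomial expansion and the re-indexing into the $X^{(m)}_k$ basis is the part that must be done carefully.
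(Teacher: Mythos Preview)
Your ``short version'' in the middle paragraph is exactly the paper's proof: one differentiates under the integral to obtain $\cD(S[f])=S[\partial_{\uo}f]=0$. The paper does this in three lines, because no series expansion, no Funk--Hecke computation, and none of the estimates \eqref{Pkj_weak} are needed here.

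Two points of overcomplication. First, what you call ``Lemma~4.2'' \emph{is} the lemma you are proving; the intertwining identity $\cD(S[f])=S[\partial_{\uo}f]$ is established inside its proof, not quoted from elsewhere, so citing it as input is circular. You do recognize that the content is ``differentiation under the integral sign'', which is correct --- but that step is immediate: $f\in\cS_0$ is by definition smooth on $\mR^2\times S^{n-1}$, the sphere $S^{n-1}$ is compact, and the integrand $f(x_0,(\ux,\uo),\uo)$ is jointly smooth in $(x_0,\ux,\uo)$, so the chain rule gives $\partial_{x_j}S[f]=S[\omega_j\,\partial_p f]$ and $\partial_{x_0}S[f]=S[\partial_{x_0}f]$ directly. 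Second, your ``main obstacle'' is not one: $S[f](x_0,\ux)$ is manifestly defined and smooth on all of $\mR^{n+1}$ for the same reason (smooth integrand, compact domain of integration), so there is no issue of analytic extension through $\ux=0$ and no need to invoke the Taylor expansion of $f$ or the decay of $\|P_{k,j}\|$. The series-based route you sketch first would eventually work, but it front-loads the explicit term-by-term computation of Proposition~\ref{dualharm} into a lemma that requires none of it.
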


\begin{proof}
Let $f:\mR^2\times S^{n-1}\to\mR_n$ be a~smooth function, $f=f(x_0,p,\uo)$. From Definition \ref{defDual}, we get that
$$\frac{\pp}{\pp x_0}S[f]=S\left[\frac{\pp f}{\pp x_0}\right]
\mbox{\ \ and\ \ }
\frac{\pp}{\pp x_j}S[f]=S\left[\omega_j \frac{\pp f}{\pp p}(x_0,p,\uo)\right],\ j=1,\ldots,n.$$
Note that the variable $x_0$ plays a~role of a parameter with respect to the transform $S$.
Hence we obtain that $\cD (S[f])=S[\pp_{\uo} f]$, which finishes the proof.
\end{proof}

\begin{proposition}\label{dualharm} (a) Let $k\in\mN_0$ and let $P_k$ be a~homogeneous harmonic polynomial in $\mR^n$ of degree $k$.
Let $\al\in\mC$ and $\Re\al>-k-1$, where $\Re\al$ denotes the real part of $\alpha$.
Then we have that
\begin{equation}\label{eq_dualharm}
S[|p|^{\al} P_k(p\, \uo)]=B(\al,k)\;|\ux|^{\al}P_k(\ux)
\end{equation}
where the constant $B(\al,k)\in\mC$ is given by
\begin{eqnarray*}
B(\al,k) &=& \frac{2^{-(\al+k)}\;\Gamma(\al+k+1)\;\Gamma(n/2)}{\Gamma((\al+2)/2)\;\Gamma((\al+2k+n)/2)}.
\end{eqnarray*}
In particular, we have that $B(\al,k)=B(\al+2,k-1)(\al+2)/(\al+k+1)$ for $k\in\mN$
and, for $\Re\al>-k-1$, that $B(\al,k)=0$ if and only if $\al\in\{-2,-4,-6,\ldots\}$.\\ \smallskip\noindent
(b) Let $j,k\in\mN_0$, $s\in\{0,1\}$ and let $P_k$ be a $k$-homogeneous spherical monogenic in $\mR^n$. Then we have that
$S[(x_0+\uo p)^{j} \uo^k P_k(\uo)]=0$ for $j<k$ and
\begin{equation}\label{eq_dualmonog}
S[(x_0+\uo p)^{2j+s+k} \uo^k P_k(\uo)]=(-1)^k B(2j,k+s)\;X^{(2j+s)}_k(x) P_k(\ux).
\end{equation}
\end{proposition}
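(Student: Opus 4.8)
\textbf{Proof plan for Proposition \ref{dualharm}.}

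The plan is to prove part (a) by a direct computation of the dual Radon transform using the Funk--Hecke theorem, and then to deduce part (b) from part (a) together with the explicit formula for the CK-extension in Lemma \ref{lckxj}. For part (a), I would first write $S[|p|^\al P_k(p\uo)](\ux) = \frac{1}{A_n}\int_{S^{n-1}} |(\ux,\uo)|^\al P_k((\ux,\uo)\uo)\,d\uo = \frac{1}{A_n}\int_{S^{n-1}} |(\ux,\uo)|^{\al+k} (\text{sgn}(\ux,\uo))^k P_k(\uo)\,d\uo$ after pulling out the homogeneity, so everything reduces to integrating $|(\ux,\uo)|^{\al+k}(\text{sgn}(\ux,\uo))^k$ against the spherical harmonic $P_k(\uo)$. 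Writing $\ux = |\ux|\uxi$ with $\uxi\in S^{n-1}$ and using homogeneity of $P_k$, the Funk--Hecke theorem gives $\int_{S^{n-1}} g((\uxi,\uo)) P_k(\uo)\,d\uo = \la_k(g) P_k(\uxi)$ where $\la_k(g)$ is an explicit one-dimensional integral of $g(t)$ against $(1-t^2)^{(n-3)/2}$ and a Gegenbauer polynomial. Here $g(t) = |t|^{\al+k}(\text{sgn}\,t)^k$, and the resulting Beta-type integral evaluates to the stated constant $B(\al,k)$; the condition $\Re\al > -k-1$ is exactly what is needed for convergence of this integral. The factor-of-$|\ux|^\al$ structure on the right-hand side then follows by reassembling $P_k(\uxi) = |\ux|^{-k}P_k(\ux)$. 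Note that since harmonic polynomials are spanned by such $P_k$ and since both sides are linear, this suffices. The recursion $B(\al,k) = B(\al+2,k-1)(\al+2)/(\al+k+1)$ and the vanishing criterion $B(\al,k)=0 \iff \al\in\{-2,-4,\ldots\}$ are then read off directly from the Gamma-function expression: the numerator $\Gamma(\al+k+1)$ has no poles in the range, the factor $\Gamma((\al+2)/2)$ in the denominator has poles precisely at $\al\in\{-2,-4,-6,\ldots\}$, and $\Gamma((\al+2k+n)/2)$ is finite and nonzero there.

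For part (b), I would first handle the vanishing statement $S[(x_0+\uo p)^j\uo^k P_k(\uo)]=0$ for $j<k$. Treating $x_0$ as a parameter and expanding $(x_0+\uo p)^j = \sum_{i=0}^j \binom{j}{i} x_0^{j-i}\uo^i p^i$, each term contributes $S[p^i \uo^{i+k} P_k(\uo)]$ up to constants, and by part (a) (with $\al = i$, applied to the harmonic polynomial decomposition, or more directly by the monogenic structure) this transforms into a multiple of the initial datum $\ux^i$ times a spherical monogenic of degree $k$; since $j<k$ we have $i\le j<k$, and one checks these combine to zero — more cleanly, one observes that $S[(x_0+\uo p)^j\uo^k P_k(\uo)]$ must be a monogenic function that is a homogeneous polynomial of degree $j$ in $\mR^{n+1}$ whose restriction to $x_0=0$ lies in the span of $\ux^i P_k$ with $i<k$, but the monogenic Fischer decomposition forces the $P_k$-isotypic component to vanish when the polynomial degree $j$ is below $k$. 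For the main identity \eqref{eq_dualmonog}, I would use that $\uo^k P_k(\uo)$ with $P_k$ spherical monogenic is, up to the sign $(-1)^k$ coming from $\uo^k$ acting on $\ux = p\uo$ (since $\uxi^2 = -1$), the restriction to the unit sphere of the harmonic polynomial $|\ux|^{?}\cdot(\text{piece})$; more precisely, writing $\uo^{k}P_k(\uo) = (-1)^k |\ux|^{-k}\ux^k\cdots$ is not quite a harmonic polynomial by itself, so instead I would split $(x_0+\uo p)^{2j+s+k}$ by the binomial theorem into a scalar part (even powers of $\uo p$) and a $\uo$-part (odd powers), apply part (a) separately to $S[p^{2i}P_k(p\uo)]$ and $S[p^{2i+1}\uo P_k(p\uo)]$ (noting $\uo P_k(\uo)$ is a spherical harmonic of degree $k+1$ and requires the constant $B(2i,k+1)$, whence the appearance of $B(2j,k+s)$ with the shift $s$), and then recognize the resulting polynomial in $x_0$ and $\ux$ as precisely $X_k^{(2j+s)}(x)P_k(\ux)$ by matching with the explicit Gegenbauer-polynomial formula \eqref{La31} in Lemma \ref{lckxj}. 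The Gegenbauer polynomial is in fact the generating object: summing the binomial expansion of $(x_0/|x| + \uxi p/|x|)^{2j+s+k}$ against the Beta-integral weights reproduces exactly the Gegenbauer polynomial $C^{(n-1)/2+k}$ and $C^{(n+1)/2+k}$ terms, and the ratio of the two embedding coefficients in \eqref{La31} matches the ratio $B(2j,k)/B(2j,k+1)$ via the recursion in part (a).

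The main obstacle I anticipate is the bookkeeping in part (b): correctly tracking the sign $(-1)^k$, the shift $s\in\{0,1\}$ between $B(2j,k)$ and $B(2j,k+1)$, and verifying that the binomial sum of Beta-integral values assembles into the precise Gegenbauer-polynomial combination appearing in $X_k^{(j)}$. Rather than doing this combinatorial identity by brute force, the cleanest route is to argue structurally: $S[(x_0+\uo p)^{j}\uo^k P_k(\uo)]$ is, by Lemma \ref{SS0} applied after extending the initial datum, a monogenic function on $\mR^{n+1}$; its restriction to $x_0=0$ is $S[(p\uo)^j\uo^k P_k(\uo)] = (-1)^k S[p^j \uo^{j+k}P_k(\uo)]$, which by part (a) (splitting $\uo^{j+k}P_k$ into its harmonic components $P_{j+k}$-type pieces, i.e. using $\uo^{j+k}P_k = (-1)^{(j+k)/2}\ux^{\cdots}$ appropriately, or just noting $\uo^{j}\cdot\uo^k P_k(\uo)$ when $j$ is even equals $(-1)^{j/2}\uo^k P_k(\uo)$ and when $j$ is odd equals $(-1)^{(j-1)/2}\uo^{k+1}P_k(\uo)$) equals $B(j,k)$ or $B(j,k+1)$ times $\ux^0 P_k(\ux)$ or $\ux P_k(\ux)$ respectively — hence $S[(x_0+\uo p)^j\uo^k P_k(\uo)]$ is the CK-extension of a constant multiple of $\ux^{j\bmod 2}\cdots$, no: more carefully, since $S$ commutes with the relevant operators and the initial datum is $\ux^{0\text{ or }1}P_k$ times a constant, the monogenic function itself must be that constant times $CK(\ux^{0\text{ or }1}P_k) $ — which is not yet $X_k^{(j)}P_k$. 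The resolution is that $(x_0 + \uo p)^j$ is itself slice monogenic and homogeneous of degree $j$, so $S[(x_0+\uo p)^j\uo^k P_k(\uo)]$ is homogeneous monogenic of degree $j$ in $\mR^{n+1}$ with the stated initial datum, and by uniqueness of the CK-extension together with Lemma \ref{lckxj} (which identifies the degree-$j$ monogenic homogeneous polynomial with that prescribed restriction as $X_k^{(j)}P_k$ up to the $\ux$-factor) we get \eqref{eq_dualmonog} with the constant pinned down by evaluating both sides at a convenient point, e.g. on the sphere $x_0 = 0$, using the known value $X_k^{(j)}|_{x_0=0}$ in terms of $C_j^\nu(0)$ and $\mu_k^j$, which precisely matches $(-1)^k B(2j,k+s)$ by the formulas for $\mu_k^j$ and $B$.
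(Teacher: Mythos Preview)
Your approach to part (a) via the Funk--Hecke theorem is a valid alternative to the paper's proof. The paper in fact mentions this route but instead carries out a more elementary argument: by $SO(n)$-invariance of $S$ it suffices to treat the single polynomial $P_k(\ux)=(x_1+ix_2)^k$, and then one proceeds by induction on $k$ using the differentiation rule $S[\nu\,\partial_p\phi]=2\partial_{\bar z}S[\phi]$ (with $\nu=\omega_1+i\omega_2$, $z=x_1+ix_2$), the base case $k=0$ being a direct spherical-coordinate computation. Your Funk--Hecke route is more conceptual but requires evaluating a Gegenbauer-weighted integral of $|t|^{\al+k}(\operatorname{sgn}t)^k$; the paper's induction sidesteps that special-function computation at the cost of introducing the auxiliary complex variable.

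For part (b) you eventually land on the paper's argument, but you make it far harder than necessary. The paper's proof of (b) is two lines: part (a) gives the identity \eqref{eq_dualmonog} on the hyperplane $x_0=0$ (after using $\uo^2=-1$ to rewrite the restriction as $|p|^{2j}$ times a spherical harmonic of degree $k+s$), and since both sides are entire monogenic functions---the left by Lemma \ref{SS0}, the right by Lemma \ref{lckxj}---uniqueness of CK-extension finishes. The vanishing for $j<k$ follows by the same mechanism: at $x_0=0$ one lands on $B(\al,\cdot)$ with $\al$ a negative even integer, which is zero by (a). There is no need for binomial expansions, Fischer-decomposition arguments, or matching Gegenbauer coefficients by hand; all of that is already packaged inside Lemma \ref{lckxj}. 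Your intermediate confusion about whether the initial datum is ``$\ux^{0\text{ or }1}P_k$'' (it is in fact a constant times $\ux^{2j+s}P_k(\ux)$) and the subsequent detours suggest you had the right endgame but not the cleanest path to it.
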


\begin{proof} ($a$) These formulas can be obtained using the Funk-Hecke theorem, see e.g.\ \cite[pp. 340-347]{dss}.
For the sake of completeness, we derive them from elementary properties of the dual Radon transform.
Due to orthogonal invariance of the dual Radon transform it is sufficient to show the formula of Proposition \ref{dualharm} only for the polynomial $P_k(\ux)=(x_1+ix_2)^k.$
Indeed, for each $A\in SO(n)$, we have that
\begin{equation}\label{dualinvariance}
S[\phi(p,A^{-1}\uo)](\ux)=S[\phi(p,\uo)](A^{-1}\ux).
\end{equation}
Denote $z=x_1+ix_2$ and $\nu=\omega_1+i\omega_2$. Thus we need to prove by induction on $k$ the following formula
\begin{equation}\label{dualpower}
S[|p|^{\al} p^k \nu^k]=B(\al,k)\;|\ux|^{\al} z^k.
\end{equation}
(i) For $k=0$, the formula can be obtained using the spherical coordinates (see \cite[p. 22]{VK}).
Indeed, we have that
$$\frac{1}{A_n}\int_{S^{n-1}}|\omega_1|^{\al}\;d\uo=\frac{A_{n-1}}{A_n}\int_0^{\pi}|\cos(\theta)|^{\al} \sin^{n-2}\theta \;d\theta=B(\al,0).$$

\noindent
(ii) Assume that $k\in\mN$ and the formula \eqref{dualpower} is valid for $k-1$.
Denote $\partial_{\bar z}=\frac 12 (\partial_{x_1}+i\partial_{x_2})$.
As we know, for a~smooth function $\phi$ and $j=1,\ldots,n,$ we have that
\begin{eqnarray}\label{dualder}
S\left[\omega_j \frac{\pp\phi}{\pp p}(p,\uo)\right]=\frac{\pp}{\pp x_j}S[\phi]\text{\ \ and, in particular,\ \ }
S\left[\nu \frac{\pp\phi}{\pp p}(p,\uo)\right]=2\frac{\pp}{\pp \bar z} S[\phi].
\end{eqnarray}
By this property and the induction assumption, we get that $(\al+k+1) S[|p|^{\al} p^k \nu^k]$ is equal to
$$
S\left[\nu^k \frac{\pp}{\pp p}(|p|^{\al}p^{k+1})\right]=
2\frac{\pp}{\pp\bar z} S[|p|^{\al+2}p^{k-1}\nu^{k-1}]=B(\al+2,k-1)(\al+2) |\ux|^{\al} z^k,
$$
which finishes the proof of the statement ($a$).

\medskip\noindent
($b$) It follows from ($a$) that \eqref{eq_dualmonog} is valid for $x_0=0$. Since the dual Radon transform $S$ maps slice monogenic functions into monogenic ones (see Lemma \ref{SS0}) both sides of \eqref{eq_dualmonog} are entire monogenic functions with the same values on the hyperplane  $x_0=0$ and so they coincide on $\mR^{n+1}$.
\end{proof}

Now we show our first main theorem.

\begin{thmA}
The dual Radon transform $S$ is a~linear map of the $\mathbb R_n$-module $\cS_0$ onto $\cM_0$.
Moreover  $\cS_0$ can be decomposed as
$$\cS_0 ={\rm ker}(S)\oplus \cS\cM_0.$$
In particular, $S$ is an isomorphism between $\cS\cM_0$ and $\cM_0$.
\end{thmA}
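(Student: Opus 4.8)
The plan is to exploit the Taylor/Laurent expansions developed in Section 3 together with the explicit computation of $S$ on basis elements given in Proposition \ref{dualharm}(b). First I would take an arbitrary $f\in\cS_0$ and write it, by Theorem \ref{t-Laurent}(ii), as $f(x_0,p,\uo)=\sum_{j\ge 0,\,k\ge 0}(x_0+\uo p)^j\uo^k P_{k,j}(\uo)$ with the coefficients $P_{k,j}$ satisfying the rapid-decay estimate \eqref{Pkj_weak}. Applying $S$ term by term (the convergence being in the $\cC^\infty$ sense, so this is legitimate) and using Proposition \ref{dualharm}(b), every summand with $j<k$ is killed, while a summand with $j\ge k$, say $j=2l+s+k$ with $s\in\{0,1\}$, is sent to $(-1)^k B(2l,k+s)X^{(2l+s)}_k(x)P_{k,j}(\ux)$. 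Collecting terms, $S[f]$ has an expansion of the form \eqref{e-Taylor_m}; the estimate \eqref{Pkj} for the resulting coefficients follows because the $B(2l,k+s)$ are bounded (indeed $B(\al,k)=B(0,k)\prod$ of explicit bounded ratios, or simply: the expansion \eqref{Pkj_weak} is much stronger than \eqref{Pkj} and multiplication by bounded scalars preserves it). Hence $S$ maps $\cS_0$ into $\cM_0$; this also reproves Lemma \ref{SS0} constructively.

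Next I would identify $\mathrm{ker}(S)$ and the complementary submodule. From the term-by-term formula, $S[f]=0$ forces, for each $k$, the vanishing of $\sum_{l,s}(-1)^kB(2l,k+s)P_{k,2l+s+k}(\ux)X^{(2l+s)}_k$; since the functions $X^{(j)}_k(x)P(\ux)$ for distinct $j$ are linearly independent (they are the CK-extensions of the linearly independent $\ux^j P(\ux)$, by Lemma \ref{lckxj}), this means $B(2l,k+s)P_{k,2l+s+k}=0$ for all $l,s$. By Proposition \ref{dualharm}(a), $B(2l,k+s)=0$ exactly when $2l\in\{-2,-4,\dots\}$, which never happens for $l\in\mN_0$; so in fact $B(2l,k+s)\ne 0$ always, and $S[f]=0$ is equivalent to $P_{k,j}=0$ for all $j\ge k$. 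Thus $\mathrm{ker}(S)$ consists precisely of the $f\in\cS_0$ whose expansion \eqref{LaurentS} has $P_{k,j}=0$ whenever $j\ge k$. On the other hand, by Corollary \ref{c-SM0}(ii), $\cS\cM_0$ consists exactly of those $f$ with $P_{k,j}=0$ whenever $j<k$. These two conditions partition the index set $\{(k,j): k\ge 0,\ j\ge 0\}$ into $\{j<k\}$ and $\{j\ge k\}$, so every $f\in\cS_0$ splits uniquely as a sum of a function in $\mathrm{ker}(S)$ and one in $\cS\cM_0$, and the two submodules intersect only in $0$. This gives $\cS_0=\mathrm{ker}(S)\oplus\cS\cM_0$.

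It then remains to show that $S$ restricted to $\cS\cM_0$ is a bijection onto $\cM_0$. For injectivity: if $f\in\cS\cM_0$ and $S[f]=0$, then $f\in\mathrm{ker}(S)\cap\cS\cM_0=\{0\}$. For surjectivity: given $g\in\cM_0$, write it by Theorem \ref{t-Taylor_m} as $g(x)=\sum_{j,k}X^{(j)}_k(x)Q_{k,j}(\ux)$ with $\limsup\sqrt[j+k]{\|Q_{k,j}\|}=0$; since all the constants $(-1)^kB(j,k)$ occurring in \eqref{eq_dualmonog} are nonzero, I set $\tilde P_{k,j}:=\big((-1)^kB(j_{\mathrm{par}},k+s)\big)^{-1}Q_{k,j}$ with the appropriate $l,s$ reading off $j$, and form $f(x_0,p,\uo)=\sum_{j,k}(x_0+\uo p)^{j+k}\uo^k\tilde P_{k,j}(\uo)$; one checks via Lemma \ref{l-estimate} (to bound the $B$-factors below) that the $\tilde P_{k,j}$ still satisfy \eqref{Pkj_3}, so by Corollary \ref{c-SM0}(iii) this $f$ lies in $\cS\cM_0$, and by construction $S[f]=g$.

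\textbf{Main obstacle.} The genuinely delicate point is the convergence bookkeeping: justifying that $S$ may be applied term by term to the (only $\cC^\infty$-convergent, not locally uniformly norm-convergent in an obvious way) series \eqref{LaurentS}, and then checking that the resulting coefficient estimates land in the correct class — \eqref{Pkj} in one direction and, more delicately, \eqref{Pkj_3} for surjectivity, where one must bound the reciprocals $1/B(2l,k+s)$ from above uniformly in $l,k$. Here the explicit Gamma-function formula for $B(\al,k)$ combined with Lemma \ref{l-estimate} is exactly what is needed: $B(2l,k+s)^{-1}$ grows at most geometrically in $l+k$, which is harmless against the superexponential decay in \eqref{Pkj} and is absorbed by the $\limsup$ condition \eqref{Pkj_3}. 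Everything else is a matching of index sets and linear independence of the embedding factors $X^{(j)}_k$.
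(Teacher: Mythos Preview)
Your proposal is correct and follows essentially the same route as the paper's own proof: expand $f\in\cS_0$ via Theorem~\ref{t-Laurent}, apply $S$ term by term using Proposition~\ref{dualharm}(b), read off $\ker(S)$ and $\cS\cM_0$ from the index conditions $j<k$ versus $j\ge k$ (cf.\ Corollary~\ref{c-SM0}), and control the constants $B(2l,k+s)$ from above and below via Lemma~\ref{l-estimate} to pass between the estimates \eqref{Pkj_weak}/\eqref{Pkj_3} and \eqref{Pkj}. The only minor remark is that your worry about term-by-term application of $S$ is lighter than you suggest: for $f\in\cS_0$ (so $j\ge 0$) the estimate \eqref{Pkj_weak} actually gives absolute and locally uniform convergence on $\mR^2\times S^{n-1}$, which the paper invokes directly.
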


\begin{proof} By Theorem \ref{t-Laurent}, each function $f\in\cS_0$ has a~unique expansion of the form
\begin{equation}\label{Taylor_S0}
f(x_0,p,\uo)=\sum_{j,k\in \mN_0}(x_0+\uo p)^j \uo^k P_{k,j}(\uo)
\end{equation}
for some $k$-homogeneous monogenic polynomials $P_{k,j}:\mR^n\to\mR_n$ satisfying the following condition:
For each $m\in\mN_0$ and each $a>0$, there is $C>0$ such that
\begin{equation}\label{Pkj_S0}
\|P_{k,j}\|\leq C\, a^{j} (k+1)^{-m}
\end{equation}
for all $j,k\in\mN_0$.
By Corollary \ref{c-SM0}, we know that a~function $f$ belongs to $\cS\cM_0$ if and only if it
has a~unique expansion of the form \eqref{Taylor_S0} with $P_{k,j}=0$ unless $j\geq k$.
Denote by $\cK$ the submodule of functions $f\in\cS_0$ which have expansions of the form \eqref{Taylor_S0} with $P_{k,j}=0$ unless $0\leq j<k$.
Obviously, we have that $$\cS_0 =\cK\oplus \cS\cM_0.$$

We show that $\cK={\rm ker}(S)$ and $\cM_0=S(\cS\cM_0)$.
To do this, let $f\in\cS_0$ have an expansion of the form
\eqref{Taylor_S0}.
Since this expansion converges absolutely and locally uniformly on $\mR^2\times S^{n-1}$  we get, by Proposition \ref{dualharm} (b), that
\[
\begin{split}
S[f]&=\sum_{j,k\in \mN_0} S[(x_0+\uo p)^j \uo^k P_{k,j}(\uo)]=\sum_{j,k\in \mN_0} S[(x_0+\uo p)^{j+k} \uo^k P_{k,j+k}(\uo)]\\
&=\sum_{j,k\in \mN_0} c_{k,j} X^{(j)}_k(x) P_{k,j+k}(\ux)=\sum_{j,k\in \mN_0} X^{( j)}_k(x) Q_{k, j}(\ux)
\end{split}
\]
where $c_{k,j}$ are non-zero constants and $Q_{k, j}=c_{k,j}P_{k,j+k}$.
In particular, we have that $\cK={\rm ker}(S)$.
Moreover, by Lemma \ref{l-estimate}, there are strictly positive constants $C_1,C_2,a_1,a_2$ such that
$$C_1 a_1^{j+k}\leq |c_{k,j}|\leq C_2 a_2^{j+k}$$ for all $j,k\in\mN_0$.
Hence, by \eqref{Pkj_S0}, for each $a>0$ (and $m=0$)
there is $C>0$ such that
$$\|Q_{k, j}\|=|c_{k,j}|\|P_{k,j+k}\|\leq C\, (aa_2)^{j+k}$$
for all $j,k\in\mN_0$.
By Theorem \ref{t-Taylor_m} and Corollary \ref{c-SM0}, we have that $\cM_0=S(\cS\cM_0)$.
\end{proof}

In the proof of Theorem A, we characterized ${\rm ker}(S)$ as follows.

\begin{corollary}\label{c-kerS}
A~function $f$ belongs to the kernel ${\rm ker}(S)$ if and only if it has a~unique expansion of the form
$$f(x_0,p,\uo)=\sum_{k>j\geq0}(x_0+\uo p)^j \uo^k P_{k,j}(\uo)$$
for some $k$-homogeneous monogenic polynomials $P_{k,j}:\mR^n\to\mR_n$ satisfying the condition \eqref{Pkj_S0}.

Furthermore, a~function $f$ belongs to $I_2({\rm ker}(S))$ if and only if it has a~unique expansion of the form
$$f(x_0,p,\uo)=\sum_{k>j\geq0}(x_0+\uo p)^{-(j+1)} \uo^k P_{k,j}(\uo)$$
where $P_{k,j}$ are as in \eqref{Pkj_S0}.
\end{corollary}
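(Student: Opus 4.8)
\textbf{Proof plan for Corollary \ref{c-kerS}.}

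The plan is to read off both statements directly from the computation carried out in the proof of Theorem A, together with the Laurent expansion results of Section 3. First I would recall that in the proof of Theorem A we introduced the submodule $\cK\subseteq\cS_0$ consisting of exactly those $f$ whose expansion \eqref{Taylor_S0} has $P_{k,j}=0$ unless $0\le j<k$, and we proved $\cK={\rm ker}(S)$. Hence the first claim is merely a restatement of this identification: $f\in{\rm ker}(S)$ if and only if $f$ has an expansion $f(x_0,p,\uo)=\sum_{k>j\ge0}(x_0+\uo p)^j\uo^k P_{k,j}(\uo)$ with the $P_{k,j}$ being $k$-homogeneous spherical monogenics, and the growth condition is inherited from the characterization of $\cS_0$ in Theorem \ref{t-Laurent}(i)(ii), i.e.\ it is precisely \eqref{Pkj_S0}. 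I would also note that uniqueness of the expansion is guaranteed by the uniqueness clause in Theorem \ref{t-Laurent}.

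For the second statement I would apply the inversion $I_2$ to the first one. By Theorem \ref{t-Laurent}(ii) we have $I_2((x_0+\uo p)^j)=(x_0+\uo p)^{-(j+1)}$ for every $j\in\mZ$, and $I_2$ is $\mathbb R_n$-linear and acts on the coefficient functions $\uo^k P_{k,j}(\uo)$ (which depend only on $\uo$, not on $(x_0,p)$) simply by carrying them along. Therefore applying $I_2$ term by term to $f(x_0,p,\uo)=\sum_{k>j\ge0}(x_0+\uo p)^j\uo^k P_{k,j}(\uo)$ yields $[I_2(f)](x_0,p,\uo)=\sum_{k>j\ge0}(x_0+\uo p)^{-(j+1)}\uo^k P_{k,j}(\uo)$. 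Since $I_2$ is an isomorphism of $\cS_0$ onto $\cS_\infty$ (Theorem \ref{t-Laurent}(ii)), $f$ ranges over ${\rm ker}(S)$ exactly when $I_2(f)$ ranges over $I_2({\rm ker}(S))$, and the index set $\{(k,j):k>j\ge0\}$, the polynomials $P_{k,j}$, and the constraint \eqref{Pkj_S0} are all unchanged. Convergence of the resulting series on $(\mR^2\setminus\{0\})\times S^{n-1}$ is automatic because $I_2$ maps $\cS_0$ into $\cS$.

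This corollary is essentially bookkeeping, so I do not expect a genuine obstacle; the only point requiring a little care is to make sure the two expansions are stated with the same monogenic-polynomial coefficients $P_{k,j}$ and the same summation range, so that the sentence ``where $P_{k,j}$ are as in \eqref{Pkj_S0}'' is literally correct — this is handled by observing that $I_2$ only changes the power of $(x_0+\uo p)$ and nothing else. One should also remember to invoke evenness (the coefficients $C_j$ in the proof of Theorem \ref{t-Laurent} are even, so only even $k$ actually occur), but since the statement quantifies over $k$-homogeneous spherical monogenics $P_{k,j}$ with $P_{k,j}$ free to be zero, this causes no conflict with the way the corollary is phrased.
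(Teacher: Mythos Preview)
Your proposal is correct and matches the paper's approach exactly: the paper states the corollary without a separate proof, presenting it as an immediate consequence of the identification $\cK={\rm ker}(S)$ established in the proof of Theorem~A together with the term-by-term action of $I_2$ from Theorem~\ref{t-Laurent}(ii). Your write-up simply spells out these steps explicitly, including the uniqueness and growth-condition remarks, which is entirely in line with the intended argument.
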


\section{The Radon transform}\label{s-Rt}

We shall use the version of the Radon transform introduced in \cite[p. 2]{GGV}.

\begin{definition}\label{defRadon}
Let $f$ be a (complex or $\mR_n$-valued) function defined in $\mR^n.$
For a non-zero vector $\uxi\in \mR^n$ and for $p\in\mR,$ we define
$$
R[f](p,\uxi):=\int_{L(\uxi,p)} f \;d\sigma
$$
whenever the integral exists.
Here $L(\uxi,p)=\{\ux\in\mR^n|\ (\ux,\uxi)=p\}$ and the $(n-1)$-form $d\sigma $ is uniquely determined on the hyperplane $L(\uxi,p)$ by the property $d(\ux,\uxi)\wedge d\sigma =d\ux$.
\end{definition}

For basic properties of the Radon transform, see \cite[Sect. 1.3]{GGV}. In particular,
 the Radon image $R[f]$ is an even function of homogeneity $-1$, that is, $R[f](p,\uxi)=R[f](-p,-\uxi)$ and, for $\lambda\in\mR\setminus\{0\}$,
we have that $R[f](\lambda p,\lambda\uxi)=|\lambda|^{-1}R[f](p,\uxi)$.
So the Radon image $R[f]$ is uniquely determined by its values on $\mR_{+}\times S^{n-1}$.
For homogeneous functions, we can say even more.

\begin{lemma}\label{radonhomog}
Let $\al\in\mC$ and let $f$ be a~function defined in $\mR^n$ and homogeneous of degree $\alpha$, that is, for $\lambda >0$, we have that $f(\lambda\ux)=\lambda^{\al}f(\ux)$.
Then, for $p>0$ and $\uxi\not=0$, it holds that
$$R[f](p,\uxi)=\frac{p^{n-1+\al}}{|\uxi|^{n+\al}} R[f](1,\uo),$$
provided that $\uo=\uxi/|\uxi|$ and the value $R[f](1,\uo)$ of the Radon transform makes sense.
\end{lemma}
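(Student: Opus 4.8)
\textbf{Proof plan for Lemma \ref{radonhomog}.}
The plan is to exploit the two scaling symmetries that the Radon transform already enjoys in order to reduce the general point $(p,\uxi)$ with $p>0$ and $\uxi\neq 0$ to the normalized point $(1,\uo)$ with $\uo=\uxi/|\uxi|$. I would proceed in two independent steps: first scale away the length of $\uxi$, then scale away the value of $p$. In the first step, I write $\uxi=|\uxi|\,\uo$ and apply the known homogeneity of $R[f]$ in the $\uxi$-variable (recalled in the excerpt just before the lemma): for $\lambda=|\uxi|>0$ we have $R[f](p,\uxi)=R[f](\lambda p,\lambda\uo)=\lambda^{-1}R[f](p/\lambda,\uo)$, hence $R[f](p,\uxi)=|\uxi|^{-1}R[f](p/|\uxi|,\uo)$. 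This already isolates a Radon image evaluated at a unit direction.

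In the second step I must move the first argument from $p/|\uxi|$ to $1$, and here the homogeneity of $f$ itself enters. For fixed $\uo\in S^{n-1}$ and $t>0$, I would perform in the defining integral $R[f](t,\uo)=\int_{L(\uo,t)}f\,d\sigma$ the change of variables $\ux=t\uy$, which maps the hyperplane $L(\uo,t)=\{(\ux,\uo)=t\}$ onto $L(\uo,1)=\{(\uy,\uo)=1\}$. Because $f$ is homogeneous of degree $\al$ one has $f(t\uy)=t^{\al}f(\uy)$, and because the measure $d\sigma$ is an $(n-1)$-form on the hyperplane it scales as $d\sigma(\ux)=t^{n-1}\,d\sigma(\uy)$ (this is consistent with the normalization $d(\ux,\uxi)\wedge d\sigma=d\ux$ in Definition \ref{defRadon}). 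Therefore $R[f](t,\uo)=t^{n-1+\al}R[f](1,\uo)$ whenever the right-hand side makes sense. Combining this with the first step applied at $t=p/|\uxi|$ gives
$$R[f](p,\uxi)=|\uxi|^{-1}\left(\frac{p}{|\uxi|}\right)^{n-1+\al}R[f](1,\uo)=\frac{p^{n-1+\al}}{|\uxi|^{n+\al}}R[f](1,\uo),$$
which is the claimed identity.

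The only delicate point is the justification of the Jacobian factor $t^{n-1}$ for the measure $d\sigma$ together with the convergence hypothesis: one should check that the substitution $\ux=t\uy$ is legitimate precisely under the stated assumption that $R[f](1,\uo)$ makes sense, since the integral over $L(\uo,1)$ is then absolutely convergent and the homogeneity of $f$ transports this convergence to $L(\uo,t)$ for every $t>0$. Everything else is a routine bookkeeping of the two scalings. I expect no real obstacle beyond stating cleanly how $d\sigma$ transforms under dilations, which follows directly from its defining property in Definition \ref{defRadon}.
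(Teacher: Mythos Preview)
Your approach is correct and essentially identical to the paper's: both use the degree $-1$ homogeneity of $R[f]$ in $(p,\uxi)$ to normalize $\uxi$ to a unit vector, and then a dilation in the defining integral to pass from $p$ to $1$, picking up $p^{n-1}$ from the $(n-1)$-dimensional measure and $p^{\al}$ from the homogeneity of $f$. (There is a small slip in your first chain of equalities---the middle term should read $R[f](p,\lambda\uo)$ rather than $R[f](\lambda p,\lambda\uo)$---but the conclusion $R[f](p,\uxi)=|\uxi|^{-1}R[f](p/|\uxi|,\uo)$ is correct.)
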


\begin{proof}
For the Radon image $R[f]$ is a~function of homogeneity $-1$ in the variables $(p,\uxi)$, it is sufficient to prove the result for $p>0$ and $|\uxi|=1$.
In this case, we have that
$$R[f](p,\uxi)=\int_{L(\uxi,0)} f(p\uxi+\uy) \;d\sigma(\uy)=p^{n-1}\int_{L(\uxi,0)} f(p\uxi+p\uy) \;d\sigma(\uy)=
p^{n-1+\al}R[f](1,\uxi),$$
which finishes the proof.
\end{proof}

To show our second main theorem we need the following results.

\begin{lemma}\label{RM8}
For each $f\in\cM_{\infty}$, we have that $R[f]\in\cS_{\infty}$.
\end{lemma}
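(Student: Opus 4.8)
\textbf{Proof plan for Lemma \ref{RM8}.}

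The plan is to reduce the statement to the explicit action of the Radon transform on the building blocks of $\cM_\infty$ and then match the resulting expansion with the description of $\cS_\infty$ provided by Theorem \ref{t-Laurent}(ii). First I would recall, from the discussion around \eqref{e-Laurent_m}, that every $f\in\cM_\infty$ has a Laurent-type expansion
$$
f(x)=\frac{\bar x}{|x|^{n+1}}\sum_{j,k\in\mN_0}\frac{X^{(j)}_k(x)\,P_{k,j}(\ux)}{|x|^{2(j+k)}},
$$
with the $P_{k,j}$ satisfying the rapid-decay/growth condition \eqref{Pkj}. Equivalently, $f=I_{n+1}(g)$ for a unique $g\in\cM_0$, and $g|_{x_0=0}$ is built from data of the form $\ux^j P_k(\ux)$. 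Restricting to the initial datum $x_0=0$ (which, as noted, determines $f\in\cM_\infty$ uniquely), the relevant homogeneous pieces are $|\ux|^{-(n+2k)}P_k(\ux)$ and $|\ux|^{-(n+2k+1)}\ux P_k(\ux)$ for spherical monogenics $P_k$ of degree $k$; more generally $|\ux|^{-(n+m)}P_k(\ux)$ and $|\ux|^{-(n+m)}\ux P_k(\ux)$ after accounting for the powers $\ux^j$.

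The core computational step is to apply the Radon transform to each such homogeneous term. By Lemma \ref{radonhomog}, a function homogeneous of degree $\alpha$ in $\mR^n$ has Radon image of homogeneity $n-1+\alpha$ in $p$; for the exponents above this degree is negative, so each $R$-image is a homogeneous, even function on $\mR\times S^{n-1}$ of the form $|p|^\beta$ times a polynomial in $\uo$ (a multiple of $P_k(\uo)$ or $\uo P_k(\uo)$), with $\beta<0$. Here I would invoke the Funk--Hecke/Gegenbauer computation — the Radon analogue of Proposition \ref{dualharm} — to see that $R[|\ux|^{-(n+m)}P_k(\ux)]$ is a nonzero constant multiple of $|p|^{-(m)}P_k(p\uo/|p|)$ type expression, i.e.\ after re-expanding in powers of $(x_0+\uo p)$ it lands among the negative-index terms $(x_0+\uo p)^{j}\uo^k P_{k,j}(\uo)$ with $j<0$. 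Then, using that $\partial_{\uo}(R[f])=R[\cD f]$ under the integrability hypotheses of \cite{GGV} (so that $R[f]$ is actually slice monogenic, its values off $p=0$ being fixed by the values on $p=0$ exactly as in the proof of Proposition \ref{dualharm}(b)), and the estimate \eqref{Pkj} transported through the (polynomially bounded) Radon constants, the full series for $R[f]$ converges in the $\cC^\infty$ sense and matches the characterization of $\cS_\infty$ in Theorem \ref{t-Laurent}(ii), namely an expansion \eqref{LaurentS} supported on $j<0$ with coefficients satisfying \eqref{Pkj_weak}. This yields $R[f]\in\cS_\infty$.

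The main obstacle I expect is the justification of term-by-term integration: the individual terms $|\ux|^{-(n+m)}P_k(\ux)$ are not globally integrable on the hyperplanes $L(\uo,p)$ near $\ux=0$ (they have a singularity at the origin, which lies on $L(\uo,0)$ only, but is approached by nearby hyperplanes), so one must argue that $R[f]$ exists for $p\neq 0$ and that the series of Radon images converges to $R[f]$. The clean way around this is to work on $\mR^n\setminus\{0\}$ with $p>0$, where each hyperplane $L(\uo,p)$ stays away from $0$, use the homogeneity reduction of Lemma \ref{radonhomog} to pull everything back to $p=1$, and control the tail via the growth bounds on $P_{k,j}$ together with the explicit (polynomially growing in $k,j$) Radon constants, exactly paralleling step (i) of the proof of Theorem \ref{t-Taylor_m}. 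A secondary point to handle carefully is evenness: since we restricted at the outset to even slice functions, one checks that $R[f]$ is even, which is immediate from $R[f](p,\uo)=R[f](-p,-\uo)$.
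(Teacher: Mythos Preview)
Your approach is correct in spirit but takes a much longer route than the paper, and as written it risks circularity.

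The paper's proof of Lemma~\ref{RM8} is a three-line argument using only the differentiation identities for the Radon transform from \cite{GGV}: with $x_0$ as a parameter one has $\partial_{x_0}R[f]=R[\partial_{x_0}f]$ and $\omega_j\,\partial_p R[f]=R[\partial_{x_j}f]$, which combine to give $\partial_{\uo}(R[f])=R[\cD f]=0$. That is the whole proof; smoothness of $R[f]$ on $(\mR^2\setminus\{0\})\times S^{n-1}$, its evenness, and its vanishing at infinity are treated as standard consequences of the decay of $f\in\cM_\infty$ and the basic properties of $R$.

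Your plan instead expands $f$ in the Laurent series \eqref{e-Laurent_m}, applies $R$ term by term, identifies each image via an explicit Funk--Hecke type computation, and then matches the outcome with Theorem~\ref{t-Laurent}(ii). That is precisely the machinery the paper deploys to prove \emph{Theorem~B}, not this preliminary lemma. More importantly, the ``Radon analogue of Proposition~\ref{dualharm}'' that you want to invoke is Proposition~\ref{radonharm}, and part~(b) of that proposition \emph{uses} Lemma~\ref{RM8} to propagate the identity from $x_0=0$ to all $x_0$. So citing it here would be circular. You could repair this by using only part~(a) on the slice $x_0=0$ and then extending via the intertwining relation $\partial_{\uo}(R[f])=R[\cD f]$ --- but at that point the intertwining relation already gives the whole lemma by itself, and the series expansion, the Funk--Hecke constants, and the dominated-convergence discussion become unnecessary scaffolding. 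Your remark about evenness and your concern about term-by-term integration are well placed, but both are bypassed by the paper's direct argument.
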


\begin{proof}
Let $f\in\cM_{\infty}$. By \cite[(c), p. 6]{GGV}, we have that
$$\frac{\pp}{\pp x_0}R[f]=R\left[\frac{\pp f}{\pp x_0}\right]
\mbox{\ \ and\ \ }
\omega_j \frac{\pp }{\pp p} R[f]=R\left[\frac{\pp f}{\pp x_j}\right ],\ j=1,\ldots,n.$$
Note that the variable $x_0$ plays a~role of parameter with respect to the transform $R$.
Hence we obtain that $\pp_{\uo} (R[f])=R[\cD f]=0$, which finishes the proof.
\end{proof}

\begin{proposition}\label{radonharm} (a) Let $k\in\mN_0$ and let $P_k$ be a~homogeneous harmonic polynomial in $\mR^n$ of degree $k$.
Let $\al\in\mC$ and $\Re\al>k$.
Then, for $p\not=0$ and $\uxi\not=0$, we have that
\begin{equation}\label{eq_radonharm}
R\left[\frac{P_k(\ux)}{|\ux|^{\al +n-1}}\right]=A(\al,k)\;\frac{P_k(p\,\uxi)\;|\uxi|^{\al-2k-1}}{|p|^{\al}}
\end{equation}
where the constant $A(\al,k)\in\mC$ is given by
\begin{equation*}
A(\al,k) = \frac{2^{k-\al+1}\pi^{n/2}\;\Gamma(\al-k)}{\Gamma((\al+n-1)/2)\;\Gamma((\al-2k+1)/2)}.
\end{equation*}
In particular, we have that $A(\al,k)=A(\al,k-1)(\al-2k+1)/(\al-k)$ for $k\in\mN$
and, for $\Re\al>k$, that $A(\al,k)=0$ if and only if $\al\in\{1,3,5,\ldots,2k-1\}$. \\ \smallskip\noindent
(b) Let $j,k\in\mN_0$, $s\in\{0,1\}$, $2j+s\geq 1$ and let $P_k$ be a~homogeneous monogenic polynomial in $\mR^n$ of degree $k$.
Then we have that
\begin{equation}\label{eq_radonmonog}
R\left[\frac{\bar x\; X_k^{(2j+s-1)}(x)P_k(\ux)}{|x|^{n+1+2(2j+s-1+k)}}\right]=(-1)^{s+1} A(2j+2k+2s,k+s)\;(x_0+\uo p)^{-(2j+s+k)}\uo^k P_k(\uo).
\end{equation}
\end{proposition}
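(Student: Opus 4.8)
The plan is to mirror exactly the strategy used for the dual Radon transform in Proposition \ref{dualharm}: first establish part (a) by reducing to a single model polynomial via orthogonal invariance and then running an induction on $k$; then bootstrap part (b) from part (a) using the fact that $R$ intertwines the Cauchy--Riemann operators (Lemma \ref{RM8}), comparing both sides on the hyperplane $x_0=0$.

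For part (a), I would first record the reduction: since $R$ is orthogonally invariant, i.e.\ $R[f\circ A^{-1}](p,\uxi)=R[f](p,A^{-1}\uxi)$ for $A\in SO(n)$, and since the harmonic polynomials of degree $k$ are generated under the $SO(n)$-action by $(x_1+ix_2)^k$, it suffices to prove \eqref{eq_radonharm} for $P_k(\ux)=z^k$ with $z=x_1+ix_2$ and correspondingly $\nu=\xi_1+i\xi_2$. The base case $k=0$ is the classical computation of the Radon transform of $|\ux|^{-(\al+n-1)}$; using Lemma \ref{radonhomog} it reduces to evaluating $R[|\ux|^{-(\al+n-1)}](1,\uo)$, which by passing to the hyperplane $L(\uo,1)$ and integrating in polar coordinates there gives a Beta-function integral yielding $A(\al,0)=2^{1-\al}\pi^{n/2}\Gamma(\al)/(\Gamma((\al+n-1)/2)\Gamma((\al+1)/2))$. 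For the inductive step I would use the derivative property of $R$ from \cite[(c), p.\ 6]{GGV}: applying $\partial_{x_1}+i\partial_{x_2}=2\partial_{\bar z}$ to $R[P_k/|\ux|^{\al+n-1}]$, note that $2\partial_{\bar z}(z^k/|\ux|^{\al+n-1})=-(\al+n-1)\,\bar z\, z^k/|\ux|^{\al+n+1}$ is, up to scalar, of the same form with $k$ unchanged but $\al$ shifted — so I would instead differentiate more cleverly: use that $|\ux|^{2}=z\bar z+(\text{rest})$ and $\partial_{\bar z}(z^{k}|\ux|^{-(\al+n-1)})$ can be rearranged so that $z^{k-1}|\ux|^{-(\al-2+n-1)}$ appears, matching the form at level $k-1$ with parameter $\al-2$; on the right-hand side $2\partial_{\bar z}$ applied to $A(\al-2,k-1)\,\nu^{k-1}p^{k-1}|\uxi|^{\al-2-2(k-1)-1}|p|^{-(\al-2)}$ produces $\nu^{k}p^{k}|\uxi|^{\al-2k-1}|p|^{-\al}$ times the claimed recursion constant. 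Matching scalars gives the recursion $A(\al,k)=A(\al,k-1)(\al-2k+1)/(\al-k)$, and unwinding it from $A(\al,0)$ yields the closed form; the zero set statement follows by inspecting the poles of the Gamma functions, exactly as in Proposition \ref{dualharm}(a).

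For part (b), I would argue as in Proposition \ref{dualharm}(b): by Lemma \ref{RM8}, $R$ maps $\cM_\infty$ into $\cS_\infty$, so both sides of \eqref{eq_radonmonog} are slice monogenic functions of $(x_0,p,\uo)$; hence it suffices to check the identity on $x_0=0$. On that hyperplane the CK-factor reduces via Lemma \ref{lckxj}: recall $X_k^{(j)}(0,\ux)=\mu_k^j|\ux|^j$ for even $j$ and $X_k^{(j)}(0,\ux)=\mu_k^j|\ux|^{j-1}\ux\cdot(\text{const})$ for odd $j$, using $C_j^\nu(0)=0$ for odd $j$. So $\bar x\, X_k^{(2j+s-1)}(x)P_k(\ux)/|x|^{n+1+2(2j+s-1+k)}$ restricted to $x_0=0$ becomes, up to an explicit scalar, either $\ux\,P_k(\ux)/|\ux|^{\al+n-1}$ or $P_k(\ux)/|\ux|^{\al+n-1}$ for the appropriate $\al=2j+2k+2s$ (the two cases according to the parity $s$, where $\bar x=-\ux$ on $x_0=0$ contributes the factor $\ux$); then part (a), together with $R[\ux\,P_k/|\ux|^{\al+n-1}]$ computed by applying $\sum e_i\partial_{x_i}$ to \eqref{eq_radonharm} and matching to the target $(x_0+\uo p)^{-(2j+s+k)}\uo^kP_k(\uo)$ at $x_0=0$, yields the stated constant $(-1)^{s+1}A(2j+2k+2s,k+s)$. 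The main obstacle I anticipate is bookkeeping: getting the exact scalar right through the chain of substitutions — the $\mu_k^j$ normalizations in Lemma \ref{lckxj}, the $(-1)$ signs coming from $\bar x=-\ux$, $\uo^2=-1$, and $\ux=p\uo$, and the shift of $\al$ by $2k+2s$ — rather than any conceptual difficulty; the structure is forced once part (a) and Lemma \ref{RM8} are in hand.
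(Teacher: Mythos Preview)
Your overall architecture --- reduction to $P_k(\ux)=z^k$ via $SO(n)$-invariance, induction on $k$ with a Beta-integral base case, and for (b) restriction to $x_0=0$ combined with Lemma \ref{RM8} --- is exactly the paper's. The gap is in the inductive step of (a). You invoke property (c) from \cite{GGV} and then write ``$2\partial_{\bar z}$ applied to $A(\al-2,k-1)\,\nu^{k-1}p^{k-1}|\uxi|^{\cdots}|p|^{-(\al-2)}$''; but $\partial_{\bar z}$ acts on the $\ux$-variables while the right-hand side is a function of $(p,\uxi)$, so this expression is not meaningful as written. Moreover, your ``clever rearrangement'' is stated in the wrong direction: $\partial_{\bar z}\bigl(z^{k}|\ux|^{-(\al+n-1)}\bigr)=-\tfrac{\al+n-1}{2}\,z^{k+1}|\ux|^{-(\al+n+1)}$ raises $k$ rather than lowering it, and the recursion one would extract from this shifts $\al$ by $2$ along with $k$, so it does not directly yield the claimed relation $A(\al,k)=A(\al,k-1)(\al-2k+1)/(\al-k)$.

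The paper uses the \emph{other} differentiation identity, namely property (d) of \cite[p.\ 7]{GGV}:
\[
\frac{\partial}{\partial p}\,R[x_j\,f]=-\frac{\partial}{\partial \xi_j}\,R[f],
\qquad\text{hence}\qquad
\frac{\partial}{\partial p}\,R[z\,g]=-2\,\partial_{\bar\nu}\,R[g].
\]
Taking $g=z^{k-1}/|\ux|^{\al+n-1}$ (same $\al$), the induction hypothesis gives $R[g]$ explicitly; applying $-2\partial_{\bar\nu}$ on the right and then integrating back in $p$ produces $R[z^{k}/|\ux|^{\al+n-1}]$ up to an integration constant $C(\uxi)$, which is forced to vanish by the homogeneity of the Radon image in $p$ (Lemma \ref{radonhomog}). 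This yields exactly the recursion $A(\al,k)=A(\al,k-1)(\al-2k+1)/(\al-k)$ with no shift in $\al$. You should replace your inductive argument with this one; the rest of your plan, including all of part (b), is correct and coincides with the paper.
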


\begin{proof}
($a$) Due to orthogonal invariance of the Radon transform it is sufficient to show the formula \eqref{eq_radonharm} only for the polynomial $P_k(\ux)=(x_1+ix_2)^k.$
Indeed, let $A$ be a~rotation of the Euclidean space $\mR^n$, that is, $A\in SO(n)$. Then, as it is well-known, we have that
\begin{equation}\label{radoninvariance}
R[f(A^{-1}\ux)](p,\uxi)=R[f](p,A^{-1}\uxi).
\end{equation}
Moreover, let $\cH_k(\mR^n)$ denote the set of harmonic polynomials $P_k$ in $\mR^n$ which are homogeneous of degree $k$.
Then the set $\cH_k(\mR^n)$ forms an irreducible module under the action of the group $SO(n)$ and $(x_1+ix_2)^k\in\cH_k(\mR^n)$.

Furthermore, we restrict ourselves to the case when $p>0.$   Denote $z=x_1+ix_2$ and $\nu=\xi_1+i\xi_2$. Thus we need to prove by induction on $k$ the following formula
\begin{equation}\label{radonpower}
R\left[\frac{z^k}{|\ux|^{\al+n-1}}\right]=A(\al,k)\;\frac{{\nu}^k\;|\uxi|^{\al-2k-1}}{p^{\al-k}}.
\end{equation}
(i) For $k=0$, the formula is known (see \cite{GGV}). It also follows from Lemma \ref{radonhomog}. Indeed, for $\uo\in S^{n-1}$, we have that
$$R[|\ux|^{-(\al+n-1)}](1,\uo)=A_{n-1}\int_0^{\infty} t^{n-2} (1+t^2)^{-(\al+n-1)/2}\; dt=A(\al,0).$$

\noindent
(ii) Assume that $k\in\mN$ and the formula \eqref{radonpower} is valid for $k-1$.
Denote $\partial_{\bar\nu}=(1/2)(\partial_{\xi_1}+i\partial_{\xi_2})$.
For $j=1,\ldots,n,$ we have that (see \cite[(d), p. 7]{GGV})
\begin{eqnarray}\label{der}
\frac{\pp}{\pp p}(R[x_j\,f(\ux)])=-\frac{\pp}{\pp\xi_j}(R[f]).
\end{eqnarray}
By this property and the induction assumption, we get that
$$\frac{\pp}{\pp p}\; R\left[z\; \frac{z^{k-1}}{|\ux|^{\al+n-1}}\right]=-2\partial_{\bar\nu}\; R\left[\frac{z^{k-1}}{|\ux|^{\al+n-1}}\right]=
-2\partial_{\bar\nu}\;\left[A(\al,k-1)\;\frac{\nu^{k-1}\;|\uxi|^{\al-2k+1}}{p^{\al-k+1}}\right]=$$
$$=-A(\al,k-1)(\al-2k+1)\;\frac{\nu^{k}\;|\uxi|^{\al-2k-1}}{p^{\al-k+1}}.$$
Moreover, by integrating with respect to the variable $p$, we obtain the following expression
$$R\left[\frac{z^k}{|\ux|^{\al+n-1}}\right]=A(\al,k-1)\frac{\al-2k+1}{\al-k}\;\frac{\nu^{k}\;|\uxi|^{\al-2k-1}}{p^{\al-k}}+C(\uxi).$$
Finally, Lemma \ref{radonhomog} shows that $C(\uxi)=0$,
which finishes the proof of the statement ($a$).

\medskip\noindent
($b$) It follows easily from ($a$) that \eqref{eq_radonmonog} is valid for $x_0=0$. Since the Radon transform $R$ maps monogenic functions into slice monogenic ones (see Lemma \ref{RM8}) both sides of \eqref{eq_radonmonog} are slice monogenic functions of $\cS_{\infty}$ with the same values on the hyperplane  $x_0=0$ and so they coincide on $(\mR^2\setminus\{0\})\times S^{n-1}$.
\end{proof}

Now we are ready to prove our second main theorem.

\begin{thmB}
The Radon transform $R$ is an injective linear map of the $\mathbb R_n$-module $\cM_\infty$ into  $\cS_\infty$.
Denote $\cS\cM_{\infty}=I_2(\cS\cM_0)$. Then $R(\cM_\infty)=\cS\cM_{\infty}$
and $R$ is an isomorphism between $\cM_\infty$ and $\cS\cM_{\infty}$.
Moreover, we have that
$$\cS_\infty=I_2({\rm ker}(S))\oplus \cS\cM_{\infty}.$$
\end{thmB}

\begin{proof}
As we know (see \eqref{e-Laurent_m}) a~given function $f\in{\cM}_\infty$  has a~unique expansion of the form
\begin{equation}\label{e-Laurent_m_2}
f(x)=\frac{\bar x}{|x|^{n+1}}\sum_{j,k\in \mN_0} \frac{X^{(j)}_k(x)  P_{k,j}(\underline{x})}{|x|^{2(j+k)}}
\end{equation}
for some $k$-homogeneous monogenic polynomials $P_{k,j}:\mR^n\to\mR_n$ satisfying
\begin{equation}\label{Pkj_4}
\limsup_{j+k\to\infty} \sqrt[j+k]{\|P_{k,j}\|}=0.
\end{equation}
Then we have
\begin{equation}\label{e-Rsum}
R[f]=\sum_{j,k\in \mN_0} R[f_{k,j}]
\mbox{\ \ with\ }f_{k,j}(x)=\frac{\bar x\;   X^{(j)}_k(x)P_{k,j}(\underline{x})}{|x|^{n+1+2(j+k)}}.
\end{equation}
Indeed, we can use Lebesgue's dominated convergence theorem because
$$\sum_{j,k\in \mN_0} R[|f_{k,j}|]<\infty.$$
Namely, for some $b>0$ and for each $j,k\in\mN_0$, we have that $|X^{(j)}_k(x)|\leq b^{k+j} |x|^j$, see Lemma \ref{l_est_EF}.
Finally, for each $j,k\in\mN_0$, we easily obtain
$$|f_{k,j}(x)|\leq \frac{b^{k+j}\|P_{k,j}\|_{\infty}}{|\ux|^{n}}\mbox{\ \ and\ \ }
R[|\ux|^{-n}](x_0,p,\uo)<\infty,\ \ \  (x_0,p,\uo)\in(\mR^2\setminus\{0\})\times S^{n-1} .$$
Hence, by \eqref{e-Rsum} and Proposition \ref{radonharm} (b), we get that
$$R[f]=\sum_{j,k\in \mN_0} d_{k,j}\;(x_0+\uo p)^{-(j+1+k)}\uo^k P_{k,j}(\uo).$$
In particular, it is immediate to see that $R[f]=0$ only if $f=0$.
Moreover, by Lemma \ref{l-estimate}, there are strictly positive constants $C_1,C_2,a_1,a_2$ such that
$$C_1 a_1^{j+k}\leq |d_{k,j}|\leq C_2 a_2^{j+k}$$ for all $j,k\in\mN_0$.
By Corollary \ref{c-SM8}, we have that $R[f]\in\cS\cM_{\infty}$ and, obviously,we have that $R(\cM_{\infty})=\cS\cM_{\infty}$.
\end{proof}


\end{document}